\documentclass[graybox]{svmult}
\usepackage[margin=4.2cm]{geometry}

\usepackage{type1cm}        
\usepackage{makeidx}         
\usepackage{graphicx}        
\usepackage{multicol}        
\usepackage[bottom]{footmisc}

\usepackage{newtxtext}       %
\usepackage[varvw]{newtxmath}       
\usepackage{cite}

\usepackage{todonotes}

\usepackage{hyperref}
\usepackage{cleveref}
\usepackage{amsmath,amsfonts,amsthm}
\usepackage{mathtools}
\usepackage{commath}

\newcommand{\R}{\mathbb{R}}
\newcommand{\N}{\mathbb{N}}

\DeclareMathOperator*{\argmin}{arg\,min}

\newcommand{\rkhs}{\mathcal{H}}

\renewcommand{\b}[1]{\mathsf{#1}} 

\makeindex             

\begin{document}

\title*{Piecewise linear interpolation via kernels}
\author{Toni Karvonen, Gabriele Santin and Tizian Wenzel}
\institute{Toni Karvonen \at Lappeenranta--Lahti University of Technology LUT, Finland. \email{toni.karvonen@lut.fi}
\and Gabriele Santin \at Ca’ Foscari
University of Venice, Italy. \email{gabriele.santin@unive.it}
\and Tizian Wenzel \at Ludwig Maximilian University of Munich and Munich Center for Machine Learning, Germany. \email{wenzel@math.lmu.de}
}
%
%
\maketitle

\abstract*{
    We consider piecewise linear interpolation from the perspective of kernel interpolation and quadrature.
    If the Sobolev space $W_2^1(0, 1)$ is equipped with a suitable inner product, its reproducing kernel is piecewise linear and gives rise to piecewise linear interpolation. 
    We show that such kernels are Green kernels for certain second-order partial differential equations and use kernel-based superconvergence theory to obtain rates of convergence for approximation of functions lying in $W_2^s(0, 1)$ for $s \in [1, 2]$.
    The rates coincide with classical rates for linear splines.
    }
\abstract{
    We consider piecewise linear interpolation from the perspective of kernel interpolation and quadrature.
    If the Sobolev space $W_2^1(0, 1)$ is equipped with a suitable inner product, its reproducing kernel is piecewise linear and gives rise to piecewise linear interpolation. 
    We show that such kernels are Green kernels for certain second-order partial differential equations and use kernel-based superconvergence theory to obtain rates of convergence for approximation of functions lying in $W_2^s(0, 1)$ for $s \in [1, 2]$.
    The rates coincide with classical rates for linear splines.
    }
\section{Introduction}

Let $K \colon \Omega \times \Omega \to \R$ be a positive-semidefinite kernel on a set $\Omega$.
A Hilbert space of real-valued functions on $\Omega$ is a reproducing kernel Hilbert space (RKHS; \cite{Berlinet2004, Paulsen2016}) if and only if the point evaluation functional $f \mapsto f(x)$ is continuous for every $x \in \Omega$.
The classical Moore--Aronszajn theorem~\cite{Aronszajn1950,Paulsen2016} states that every $K$ induces a unique RKHS $\mathcal{H}$ in which the kernel is reproducing, which means that $\langle f, K(\cdot, x) \rangle_\mathcal{H} = f(x)$ for every
$f \in \mathcal{H}$ and $x \in \Omega$.
Conversely, every RKHS has a unique reproducing kernel that is positive-semidefinite.

Let $x_0, \ldots, x_n \in \Omega$ be pairwise distinct points and $f \colon \Omega \to \R$ a function.
Let $K_x = K(\cdot, x)$ denote a kernel translate.
Define
\begin{equation*}
    \b{K} = (K(x_i, x_j))_{i,j=0}^n \in \R^{(n+1) \times (n+1)} \quad \text{ and } \quad \b{f} = (f(x_i))_{i=0}^n \in \R^{n+1}.
\end{equation*}
If the Gram matrix $\b{K}$ is invertible (e.g., the kernel is positive-definite), the \emph{kernel interpolant}, $P_n f$, is the unique function $s$ in the span of the kernel translates $K_{x_0}, \ldots, K_{x_n}$ such that $s(x_i) = f(x_i)$ for every $i \in \{0, \ldots, n\}$.
The interpolant can be written as $P_n f = \sum_{i=0}^n c_i K_{x_i}$, where the coefficients $\b{c} = (c_0, \ldots, c_n)$ solve the linear system $\b{K} \b{c} = \b{f}$.
If $f \in \mathcal{H}$, the kernel interpolant is the orthogonal projection of $f$ onto the span of the kernel translates.
The orthogonal projection exists even if the Gram matrix $\b{K}$ is non-invertible.
The kernel interpolant is worst-case optimal in $\mathcal{H}$, which is to say that
\begin{equation*}
    (P_n f)(x) = \argmin_{u_0, \ldots, u_n \in \R} \sup_{\lVert f \rVert_\mathcal{H} \leq 1} \bigg\lvert f(x) - \sum_{i=0}^n u_i f(x_i) \bigg\rvert 
\end{equation*}
for every $x \in \Omega$.
We refer to \cite{wendland2005scattered} and Chapter~8 in~\cite{iske2019book} for information about kernel interpolation.
See Chapter~10 in~\cite{NovakWozniakowski2010} for the worst-case perspective.

Let $0 = x_0 < x_1 < \cdots < x_{n-1} < x_n = 1$ be a strictly increasing sequence of points on the unit interval and $\Delta_i = x_i - x_{i-1}$.
The \emph{piecewise linear interpolant} (or \emph{linear spline}) to a function $f \colon [0, 1] \to \R$ at these points is the function $L_n f \colon [0, 1] \to \R$ given by
\begin{equation*}
    (L_n f)(x) = f(x_{i-1}) + \frac{f(x_{i}) - f(x_{i-1})}{\Delta_i} (x - x_{i-1}) \quad \text{ for } \quad x \in [x_{i-1}, x_i],    
\end{equation*}
where $i \in \{1, \ldots, n\}$.
The purpose of this article is to study piecewise linear interpolation as kernel interpolation.
It is easy to establish a correspondence between the two types of interpolation if the kernel is, in our terminology, 2-piecewise linear.

\begin{definition}
    A symmetric kernel $K \colon [0, 1] \times [0, 1] \to \R$ is \emph{2-piecewise linear} if $K_x = K(\cdot, x)$ is affine on $[0, x]$ and $[x, 1]$ for every $x \in [0, 1]$.
\end{definition}

\begin{theorem} \label{thm:piecewise-linear}
    Let $0 = x_0 < x_1 < \cdots x_{n-1} < x_n = 1$.
    If $K$ is a 2-piecewise linear positive-semidefinite kernel on $[0, 1]$, then $P_n f = L_n f$ for every $f \in \mathcal{H}$.    
\end{theorem}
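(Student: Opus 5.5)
The proof rests on two observations. First, every function in the span of the kernel translates at the nodes is piecewise linear with breakpoints only at the nodes. Second, the orthogonal projection $P_n f$ interpolates $f$ at the nodes even when $\b{K}$ is singular. A continuous piecewise linear function on the given grid is uniquely determined by its values at $x_0, \ldots, x_n$, so these two facts together force $P_n f = L_n f$.

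\emph{Step 1 (structure of the span).} Fix a node $x_i$. By the definition of a 2-piecewise linear kernel, $K_{x_i}$ is affine on $[0, x_i]$ and on $[x_i, 1]$. Hence it is continuous on $[0,1]$, since the two affine pieces agree at $x_i$. Its only possible kink is at $x_i$, which is one of the grid points. Consequently $K_{x_i}$ is affine on every subinterval $[x_{j-1}, x_j]$, $j \in \{1, \ldots, n\}$: for $j \le i$ we have $[x_{j-1},x_j] \subset [0,x_i]$, and for $j > i$ we have $[x_{j-1},x_j] \subset [x_i,1]$. The endpoint cases $i=0$ and $i=n$ are covered, because there $K_{x_i}$ is affine on all of $[0,1]$. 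Any finite linear combination $s = \sum_i c_i K_{x_i}$ is therefore continuous and affine on each $[x_{j-1}, x_j]$.

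\emph{Step 2 (interpolation by the projection).} Let $f \in \mathcal{H}$ and let $P_n f$ be its orthogonal projection onto $V = \operatorname{span}\{K_{x_0}, \ldots, K_{x_n}\}$. This projection exists because $V$ is finite-dimensional and hence closed, whether or not $\b{K}$ is invertible. Then $f - P_n f \perp K_{x_i}$ for each $i$, and the reproducing property gives
\[
f(x_i) - (P_n f)(x_i) = \langle f - P_n f, K_{x_i} \rangle_\mathcal{H} = 0 .
\]
So $P_n f$ agrees with $f$ at all nodes.

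\emph{Step 3 (uniqueness).} By Step 1, $P_n f$ is affine on each $[x_{j-1}, x_j]$. By Step 2, it takes the values $f(x_{j-1})$ and $f(x_j)$ at the endpoints of that interval. An affine function on an interval is determined by its two endpoint values, so on $[x_{j-1},x_j]$ it coincides with the formula defining $L_n f$. Since the subintervals cover $[0,1]$, this gives $P_n f = L_n f$.

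The only point requiring care is the case of a merely positive-semidefinite kernel. There the interpolant cannot be defined via $\b{c} = \b{K}^{-1}\b{f}$, so Step 2 must use the projection characterization rather than the linear system. The argument above only uses the projection and the reproducing property, so it handles this case. Beyond that, the proof is just the observation that all kinks of the kernel translates lie at grid points.
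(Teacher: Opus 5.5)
Your proof is correct and follows the paper's argument. The kernel translates are affine between consecutive nodes, the interpolant agrees with $f$ at the nodes, and an affine function on each subinterval is fixed by its endpoint values, so $P_n f = L_n f$. Your Step 2 uses the reproducing property to show that the orthogonal projection interpolates $f \in \mathcal{H}$ even when the Gram matrix is singular, which spells out a step the paper's proof only assumes.
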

\begin{proof}    
    Let $f \in \mathcal{H}$.
    The kernel interpolant is $P_n f = \sum_{i=0}^n c_i K_{x_i}$ for constants $c_i$ such that $(P_n f)(x_i) = f(x_i)$ for every $i \in \{0, \ldots, n\}$.
    Every kernel translate in the sum is affine on $[x_{i-1}, x_{i}]$ for any $i \in \{1, \ldots, n\}$.
    Therefore $P_n f$ is affine on each of the intervals $[x_{i-1}, x_{i}]$.
    Because $P_n f$ interpolates $f \in \mathcal{H}$, it must therefore coincide with the piecewise linear interpolant $L_n f$.
\end{proof}

Although we have not found \Cref{thm:piecewise-linear} explicitly in the literature, the takeaway that piecewise linear interpolation (and, more generally, spline interpolation) can be recovered from kernel interpolation is well-known.
The connection of piecewise linear interpolation and the trapezoidal rule, an integral approximation obtained by integrating the interpolant, to optimal approximation and integration in Sobolev spaces and with the Brownian motion (i.e., via the kernel $K(x, y) = \min\{x, y\}$) has a history that stretches back to at least the 1950s~\cite{Diaconis1988, DucJacquet1973, Suldin1959, Suldin1960}. 
The work by Kimeldorf and Wahba~\cite{KimeldorfWahba1970, KimeldorfWahba1970b, KimeldorfWahba1971} from the 1970s that connects approximation in an RKHS and Bayesian estimation to splines is particularly famous.
See also \cite{Wahba1990} and Chapter~II in \cite{Ritter2000}.
However, to the best of our knowledge the tools developed for kernel-based approximation, in particular superconvergence, have not been previously used in a systematic study of piecewise linear approximation. 

Our contributions are (i) an identification of Sobolev spaces that give rise to piecewise linear interpolation and (ii) an interpretation of the resulting reproducing kernels as Green kernels and a subsequent application of superconvergence theory.
Specifically, in Section~\ref{sec:sobolev-space} we use the \Cref{thm:piecewise-linear} to cast piecewise linear interpolation as kernel interpolation when $\rkhs$ is the Sobolev space $W_2^1(0,1)$ of order one equipped with an inner product of the form
\begin{equation} 
    \langle f, g \rangle = \langle f', g' \rangle_{L_2(0, 1)} + \text{ suitable boundary terms}.
\end{equation}
As we show in Section~\ref{sec:green-kernels}, such kernels are Green kernels for certain second-order partial differential equations (PDEs).
This enables the application of \emph{kernel-based superconvergence theory}~\cite{karvonen2025general, schaback2018superconvergence, sloan2025doubling}, which we use in Section~\ref{sec:superconvergence} to obtain rates of convergence for linear interpolation when $f \in W_2^\theta(0, 1)$ for $\theta \in [1, 2] \setminus \{3/2\}$.
In the process we describe abstract interpolation spaces that are used in superconvergence theory.
The final convergence result is given in \Cref{cor:superconvergence-W1}.
These convergence rates are not new, coinciding with classical rates for linear splines~\cite{HedstromVarga1971,SwartzVarga1972} that we review in Section~\ref{sec:classical-bounds}.

\section{Piecewise linear kernels as reproducing kernels of $W_2^1(0,1)$} \label{sec:sobolev-space}

Let $p \in [1, \infty]$.
The Sobolev space $W^s_p(0, 1)$ of order $s \in \N_0$ on $[0, 1]$ consists of $s-1$ times differentiable functions such that the derivative $f^{(s-1)}$ is absolutely continuous and $f^{(s)}$, which exists almost everywhere, is in $L_p(0, 1)$.
The norm and seminorm are
\begin{equation*}
    \lVert f \rVert_{W^s_p(0,1)} = \bigg( \sum_{r = 0}^s \lVert f^{(r)} \rVert_{L_p(0,1)}^p \bigg)^{1/p} \quad \text{ and } \quad \lvert f \rvert_{W^s_p(0,1)} = \lVert f^{(s)} \rVert_{L_p(0,1)}.
\end{equation*}
Let $\alpha_0, \alpha_1, \alpha_2, \beta \in \R$. 
We consider the bilinear form 
\begin{equation} \label{eq:rkhs-inner-product}
    \langle f, g \rangle = \alpha_0 f(0) g(0) + \alpha_1 f(1) g(1) + \alpha_2 [f(0) g(1) + f(1) g(0)] + \beta \langle f', g' \rangle_{L_2(0, 1)}
\end{equation}
on $W_2^1(0, 1)$.
Note that $f(0)g(1)$ and $f(1)g(0)$ have the same prefactor $\alpha_2$ to ensure the symmetry of $\langle f, g \rangle$.
By introducing $\b{A} \in \R^{2 \times 2}$ and $\b{B} \colon W_2^1(0, 1) \to \R^2$ defined as
\begin{equation*}
    \b{A} = 
    \begin{pmatrix}
    \alpha_0 & \alpha_2 \\
    \alpha_2 & \alpha_1
    \end{pmatrix}
    \quad \text{ and }
    \quad
    \b{B}(f) = 
    \begin{bmatrix}
        f(0) \\ f(1) 
    \end{bmatrix},
\end{equation*}
we may write the bilinear form as $\langle f, g \rangle = \langle \b{B} (f), \b{A} \b{B} (g) \rangle_{\R^2} + \beta \langle f', g' \rangle_{L_2(0, 1)}$.
This means that $\langle \cdot, \cdot \rangle$ falls in the framework of~\cite{fasshauer2013reproducing}.
The Sobolev space $W_2^1(0,1)$ equipped with the bilinear form in~\eqref{eq:rkhs-inner-product} is an RKHS if $\beta$ is positive and $\b{A}$ positive-definite, which is equivalent to $\alpha_0 \alpha_1 - \alpha_2^2 > 0$ and $\alpha_0 + \alpha_1 > 0$.

\begin{theorem} \label{thm:sobolev-1-space}
    If $\beta > 0$ and $\b{A}$ is a positive-definite matrix or a diagonal matrix such that at least one of $\alpha_0, \alpha_1 \geq 0$ is positive, then the bilinear form in~\eqref{eq:rkhs-inner-product} is an inner product on \smash{$W_2^1(0, 1)$}.
    The resulting inner product space is an RKHS whose reproducing kernel is 2-piecewise linear and given by
    \begin{equation} \label{eq:W1-kernel}
        \begin{split}
        K(x, y) = \frac{1}{\beta \Delta} \big[ \alpha_1 \beta &+ \beta^2 - \max\{x,y\} (\alpha_1+\alpha_2) \beta  \\
        &+ \min\{x, y\} ( \alpha_0\alpha_1+\alpha_0\beta-\alpha_2^2+\alpha_2\beta ) + xy(\alpha_2^2 - \alpha_0\alpha_1) \big],
        \end{split}
    \end{equation}
    where $\Delta = \alpha_0 \alpha_1 - \alpha_2^2 + \beta(\alpha_0 + \alpha_1 + 2\alpha_2) > 0$.
\end{theorem}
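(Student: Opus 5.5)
The proof has three parts: show the bilinear form is an inner product, show the resulting space is complete with continuous point evaluations (hence an RKHS), and identify the kernel by checking the reproducing property directly. Symmetry and bilinearity of \eqref{eq:rkhs-inner-product} are immediate.

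For positive-definiteness, note $\langle f, f \rangle = \langle \b{B}(f), \b{A}\b{B}(f)\rangle_{\R^2} + \beta \lVert f' \rVert_{L_2(0,1)}^2$, and both terms are nonnegative under either hypothesis on $\b{A}$. If $\langle f, f\rangle = 0$, then $f' = 0$ almost everywhere, so $f \equiv c$ by absolute continuity. The boundary term then equals $c^2(\alpha_0 + \alpha_1 + 2\alpha_2)$. This is positive for $c \neq 0$: in the positive-definite case because $\alpha_0+\alpha_1+2\alpha_2 = (1,1)\b{A}(1,1)^\top > 0$, and in the diagonal case because $\alpha_2 = 0$ and $\alpha_0,\alpha_1 \ge 0$ with one of them positive. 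Hence $f = 0$.

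The same computation gives norm equivalence with $W_2^1(0,1)$. The upper bound $\langle f,f\rangle \lesssim \lVert f\rVert_{W_2^1}^2$ follows from $\lvert f(x)\rvert \le \lVert f\rVert_{L_2} + \lVert f'\rVert_{L_2}$, a consequence of the fundamental theorem of calculus. For the lower bound, we may suppose $\alpha_0 > 0$ (the case $\alpha_1 > 0$ is symmetric, evaluating at $1$). In the diagonal case, $\lVert f \rVert_{L_2} \le \lvert f(0)\rvert + \lVert f'\rVert_{L_2}$ and $\alpha_0 f(0)^2 \le \langle \b{B}f, \b{A}\b{B}f\rangle$ give the bound. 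In the positive-definite case we use $\langle \b{B}f, \b{A}\b{B}f\rangle \ge \lambda_{\min}(\b{A}) f(0)^2$ instead. Completeness of $W_2^1(0,1)$ then transfers, and point evaluation is continuous, so the space is an RKHS.

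For the kernel, fix $y$. The function $K_y$ in \eqref{eq:W1-kernel} is affine in $x$ on $[0,y]$ and on $[y,1]$, since the $\max$, $\min$ and $xy$ terms are each affine there; this gives 2-piecewise linearity and $K_y \in W_2^1(0,1)$. By uniqueness of the reproducing kernel, it suffices to verify $\langle f, K_y\rangle = f(y)$ for all $f$. Write $K_y' = a$ on $(0,y)$ and $K_y' = b$ on $(y,1)$. Then
\begin{equation*}
\beta\langle f', K_y'\rangle_{L_2} = \beta a (f(y)-f(0)) + \beta b (f(1)-f(y)).
\end{equation*}
Hence $\langle f, K_y\rangle = f(y)$ for all $f$ if and only if three coefficient conditions hold. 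The coefficient of $f(y)$ requires $\beta(a-b) = 1$. The coefficient of $f(0)$ requires $\alpha_0 K_y(0) + \alpha_2 K_y(1) - \beta a = 0$. The coefficient of $f(1)$ requires $\alpha_1 K_y(1) + \alpha_2 K_y(0) + \beta b = 0$. Since $f(0), f(y), f(1)$ can be prescribed independently, these conditions are also necessary.

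The main work is the algebra. I would read off $a$, $b$, $K_y(0)$ and $K_y(1)$ from \eqref{eq:W1-kernel}:
\begin{equation*}
a = \frac{\alpha_0\alpha_1+\alpha_0\beta-\alpha_2^2+\alpha_2\beta + y(\alpha_2^2-\alpha_0\alpha_1)}{\beta\Delta},
\end{equation*}
with $b$ obtained similarly using the $\max$ term, and then check the three identities. A more transparent route is to derive the kernel rather than verify it: treat the four unknowns as the solution of a linear system consisting of the three conditions above together with the affine compatibility relation $K_y(1) - K_y(0) = a y + b(1-y)$. Solving this $4\times 4$ system by Cramer's rule produces the determinant $\beta\Delta$, which explains the denominator.

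Finally, $\Delta > 0$. One has $\Delta = \det \b{A} + \beta(\alpha_0+\alpha_1+2\alpha_2)$, which is positive in the positive-definite case by the previous computation. In the diagonal case it equals $\alpha_0\alpha_1 + \beta(\alpha_0+\alpha_1) > 0$.
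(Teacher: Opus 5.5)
Your proof is correct. The inner-product and RKHS part is essentially the paper's argument. Your reduction of $\langle f,f\rangle=0$ to constants, and the identity $\Delta=\det\b{A}+\beta\,(1,1)\b{A}(1,1)^\top$, make explicit what the paper only asserts.

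The kernel identification, however, runs in the opposite direction. The paper \emph{derives} $K$. It tests the reproducing property against $h\equiv 1$ and the ramps $f_y$, $g_y$, whose derivatives are indicators, so only the three numbers $K_x(0)$, $K_x(1)$, $K_x(y)$ appear. Solving these $3\times 3$ systems (separately for $x\le y$ and $x>y$) produces the formula. There, 2-piecewise linearity is a conclusion rather than an ansatz, but the argument relies on the a priori existence of the kernel and leaves the elimination as ``straightforward''.

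You put the kink in the kernel translate instead of the test function. Because $K_y'$ is piecewise constant, $\langle f,K_y\rangle$ collapses to a combination of $f(0)$, $f(y)$, $f(1)$ for \emph{every} $f$. Sufficiency plus uniqueness of the representer then finishes, so the reproducing property is checked directly rather than inferred from the existence of $K$.

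The algebra you deferred does close. With $\Delta K_y(0)=\alpha_1+\beta-y(\alpha_1+\alpha_2)$, $\Delta K_y(1)=\beta-\alpha_2+y(\alpha_0+\alpha_2)$ and $\beta\Delta b=-(\alpha_1+\alpha_2)\beta+y(\alpha_2^2-\alpha_0\alpha_1)$, all three identities cancel. Your $4\times4$ determinant is $\pm\beta\Delta$, independent of $y$.

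A real bonus of your route is that your three conditions are exactly the jump condition and the two boundary conditions of \eqref{eq:pde_green}. The same computation therefore also proves \Cref{cor:green}. Your necessity remark needs $y\in(0,1)$, but only sufficiency is used, and that remains valid at $y\in\{0,1\}$.
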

\begin{proof}
    If $\b{A}$ is a positive-definite matrix (so that $\alpha_0, \alpha_1 > 0$) or a diagonal matrix such that at least one of $\alpha_0, \alpha_1 \geq 0$ is positive, there are $C, c > 0$ such that 
    \begin{equation} \label{eq:A-PD-implication}
        c ( \alpha_0 f(0)^2 + \alpha_1 f(1)^2) \leq \langle \b{B} (f), \b{A} \b{B} (f) \rangle_{\R^2} \leq C ( f(0)^2 + f(1)^2) .
    \end{equation}
    From \smash{$W_2^1(0, 1) \subset C(0,1)$} it thus follows that $\langle f, f \rangle = 0$ if and only if $f = 0$.
    Therefore~\eqref{eq:rkhs-inner-product} defines an inner product on \smash{$W_2^1(0, 1)$}.
    The Sobolev space \smash{$W^1_2(0, 1)$} equipped with the standard Sobolev inner product is a Hilbert space. 
    The fundamental theorem of calculus and the Cauchy--Schwarz inequality give \smash{$f(x) - f(y) = \int_y^x f'(z) \dif z \leq \lVert f' \rVert_{L_2(0,1)}$}.
    Integration over $y$ and another application of Cauchy--Schwarz yield
    \begin{equation*}
        \lvert f(x) \rvert \leq \lvert f(y) \rvert + \lvert f(x) - f(y) \rvert \leq \lVert f \rVert_{L_2(0,1)} + \lVert f' \rVert_{L_2(0,1)} \leq \sqrt{2} \, \lVert f \rVert_{W^1_2(0,1)} .
    \end{equation*}
    That is, point evaluations are bounded and $W^1_2(0,1)$ is thus an RKHS.
    In combination with~\eqref{eq:A-PD-implication} and estimates like
    \begin{equation*}
        \begin{split}
        \lVert f \rVert_{L_2(0,1)}^2 = \int_0^1 \bigg( \int_0^x f'(y) \dif y - f(0) \bigg)^2 \dif x &\leq 2 \int_0^1 \bigg( \int_0^x f'(y)^2 \dif y + f(0)^2 \bigg) \dif x \\
        &\leq 2 [\lVert f' \rVert_{L_2(0,1)}^2  + f(0)^2 ],
        \end{split}
    \end{equation*}
    this shows that the Sobolev norm is equivalent to the norm induced by~\eqref{eq:rkhs-inner-product}.
    Therefore $W^1_2(0, 1)$ equipped with the inner product~\eqref{eq:rkhs-inner-product} is an RKHS.

    Let $K$ be the unique reproducing kernel.
    For the constant function $h \equiv 1 \in W^1_2(0,1)$ the reproducing property and~\eqref{eq:rkhs-inner-product} yield 
    \begin{equation} \label{eq:eq11}
        1 = \langle h, K_x \rangle = (\alpha_0 + \alpha_2) K_x(0) + (\alpha_1 + \alpha_2) K_x(1).
    \end{equation}
    Let $y \in (0, 1)$.
    The functions
    \begin{equation*}
        f_y(x) =
        \begin{cases}
            0 &\text{ if } \: x \leq y, \\
            x-y & \text{ if } \: x > y             
        \end{cases}
        \quad
        \text{ and }
        \quad
        g_y(x) = 
        \begin{cases}
            y-x &\text{ if } \: x \leq y, \\
            0 & \text{ if } \: x > y             
        \end{cases}
    \end{equation*}
    are in $W^1_2(0, 1)$.
    For $x \leq y$ the reproducing property yields
    \begin{equation} \label{eq:eq12}
        \begin{split}
        0 = f_y(x) = \langle f_y, K_x \rangle &= \alpha_1 (1 - y) K_x(1) + \alpha_2(1 - y) K_x(0) + \beta \int_y^1 K_x'(z) \dif z \\
        &= \alpha_1 (1 - y) K_x(1) + \alpha_2(1 - y) K_x(0) + \beta [ K_x(1) - K_x(y) ]
        \end{split}
    \end{equation}
    and 
    \begin{equation} \label{eq:eq13}
        \begin{split}
        y - x = g_y(x) = \langle g_y, K_x \rangle &= \alpha_0 y K_x(0) + \alpha_2 y K_x(1) - \beta \int_0^y K_x'(z) \dif z \\
        &= \alpha_0 y K_x(0) + \alpha_2 y K_x(1) + \beta [K_x(0) - K_x(y) ] .
        \end{split}
    \end{equation}
    It is straightforward to solve $K_x(y)$ from equations \eqref{eq:eq11}--\eqref{eq:eq13} for $x \leq y$.
    Doing so gives
    \begin{equation*}\label{eq:left_kernel}
        K_x(y) = \frac{ \alpha_1 \beta + \beta^2 - y (\alpha_1+\alpha_2) \beta + x ( \alpha_0\alpha_1+\alpha_0\beta-\alpha_2^2+\alpha_2\beta ) + xy(\alpha_2^2 - \alpha_0\alpha_1) }{ \beta \Delta } ,
    \end{equation*}
    where $\Delta = \alpha_0 \alpha_1 - \alpha_2^2 + \beta(\alpha_0 + \alpha_1 + 2\alpha_2)$ is positive by the assumption on $\b{A}$.
    If $x > y$, Equations \eqref{eq:eq12} and~\eqref{eq:eq13} become 
    \begin{equation} \label{eq:eq22}
        x - y = f_y(x) = \alpha_1 (1 - y) K_x(1) + \alpha_2 (1 - y) K_x(0) + \beta [ K_x(1) - K_x(y) ]
    \end{equation}
    and
    \begin{equation} \label{eq:eq23}
        0 = g_y(x) = \alpha_0 y K_x(0) + \alpha_2 y K_x(1) + \beta [K_x(0) - K_x(y) ].
    \end{equation}
    Solving $K_x(y)$ from~\eqref{eq:eq11}, \eqref{eq:eq22}, and~\eqref{eq:eq23} yields
    \begin{equation*}\label{eq:right_kernel}
        K_x(y) = \frac{ \alpha_1\beta+\beta^2 - x (\alpha_1 + \alpha_2)\beta + y ( \alpha_0\alpha_1+\alpha_0\beta-\alpha_2^2+\alpha_2\beta ) + xy (\alpha_2^2 
-\alpha_0\alpha_1)}
{\beta \Delta}
    \end{equation*}
    for $x > y$. 
    This is the claimed form of the kernel, which is clearly 2-piecewise linear.
\end{proof}

\subsection{Examples of piecewise linear kernels covered by \Cref{thm:sobolev-1-space}} \label{sec:examples-1}

For $\alpha_0 > 0$ and $\alpha_1 = \alpha_2 = 0$
we get the \emph{released Brownian motion kernel} and for $\alpha_1 > 0$ and $\alpha_0 = \alpha_2 = 0$ the \emph{released reverse Brownian motion kernel}.
These kernels are
\begin{equation} 
        K(x, y) = \alpha_0^{-1} + \beta^{-1} \min\{x, y\} \quad \text{ and } \quad K(x, y) = \alpha_1^{-1} + \beta^{-1} ( 1 - \max\{x, y\}) ,
\end{equation}
respectively.
These kernels are known to be positive-definite.

\subsection{Examples of piecewise linear kernels as limits of \Cref{thm:sobolev-1-space}} \label{sec:examples-2}

Consider the bilinear form $\langle f, g \rangle = \beta \langle f', g' \rangle_{L_2(0,1)}$ that \Cref{thm:sobolev-1-space} does not cover.
This is not an inner product on \smash{$W_2^1(0, 1)$}, as is easily seen by considering any non-zero constant function.
However, $\langle \cdot, \cdot \rangle$ becomes an inner product under zero Dirichlet boundary conditions.
There are three cases, each of which is easily verified with a technique similar to the proof of \Cref{thm:sobolev-1-space}:
\begin{enumerate}
    \item[(a)] The space $\{ f \in W_2^1(0, 1) : f(0) = 0\}$ equipped with $\langle \cdot, \cdot \rangle$ is an RKHS whose reproducing kernel is the \emph{Brownian motion kernel} $K(x, y) = \beta^{-1} \min\{x, y\}$.
\item[(b)] The space $\{ f \in W_2^1(0, 1) : f(1) = 0\}$ equipped with $\langle \cdot, \cdot \rangle$ is an RKHS whose reproducing kernel is the \emph{reverse Brownian motion kernel} $K(x, y) = \beta^{-1} (1 - \max\{x, y\})$.
\item[(c)] The space $\{ f \in W_2^1(0, 1) : f(0) = f(1) = 0\}$ equipped with $\langle \cdot, \cdot \rangle$ is an RKHS whose reproducing kernel is the \emph{Brownian bridge kernel} $K(x, y) = \beta^{-1} ( \min\{x, y\} - xy)$.
\end{enumerate}
Each of these kernels is positive-semidefinite and 2-piecewise linear.
Formally, these three kernels are obtained from~\eqref{eq:W1-kernel} by setting (a) $\alpha_1 = \alpha_2 = 0$ and $\alpha_0 \to \infty$, (b) $\alpha_0 = \alpha_2 = 0$ and $\alpha_1 \to \infty$, and (c) $\alpha_0 = \alpha_1 \to \infty$.

\begin{remark}
    These examples show that the assumption $f \in \mathcal{H}$ in Theorem~\ref{thm:piecewise-linear} is not superfluous.
    For example, in case~(a) every kernel interpolant evaluates to zero at the origin and cannot therefore interpolate functions such that $f(0) \neq 0$.
\end{remark}

The matrix $\b{A}$ is positive-semidefinite if $\alpha_0 \alpha_1 - \alpha_2^2 = 0$.
While \Cref{thm:sobolev-1-space} is not applicable in this case, selecting such $\alpha_0$, $\alpha_1$, and $\alpha_2$ gives the kernel
\begin{equation*}
    K(x, y) = \frac{\alpha_1 + \beta - \max\{x, y\} (\alpha_1 + \alpha_2) + \min\{x, y\} (\alpha_0 + \alpha_2)}{\beta(\alpha_0 + \alpha_1 + 2\alpha_2)}
\end{equation*}
provided that $\alpha_0 + \alpha_1 + 2\alpha_2 > 0$.
Furthermore, if $\gamma \coloneqq \alpha_1 + \alpha_2 = \alpha_0 + \alpha_2$, we can use the identity $\lvert x - y\rvert = \max\{x, y\} - \min\{x, y\}$ to 
simplify the kernel to
\begin{equation*}
    K(x, y) = (2\beta\gamma)^{-1}(\alpha_1 + \beta - \gamma \lvert x - y \rvert) .
\end{equation*}
Let $\varepsilon > 0$. 
This kernel is clearly related to the positive-semidefinite \emph{Wendland kernel} $K(x, y) = \max\{0, 1 - \varepsilon \lvert x - y \rvert \}$.
Note that the Wendland kernel is 2-piecewise linear on $[0, 1]$ if and only if $\varepsilon \in (0, 1]$.

\section{Piecewise linear kernels as Green kernels} \label{sec:green-kernels}
In order to apply superconvergence results~\cite{karvonen2025general}, it will be crucial to associate the kernel~\eqref{eq:W1-kernel} to a boundary value problem by providing a connection with the corresponding Green kernel.
Recall that the Green kernel of a PDE $\mathcal{L} u = f$ on some domain $\Omega$ subject to some boundary conditions is a function $G \colon \Omega \times \Omega \to \R$ such that $u(x) = \int_\Omega G(x, y) f(y) \dif y$.
The connection between Green kernels and reproducing kernels has been studied in~\cite{fasshauer2011reproducing, 
fasshauer2013reproducing}.
The following result shows that the kernels of Theorem~\ref{thm:sobolev-1-space} are Green kernels of certain simple second-order PDEs.
\begin{corollary}\label{cor:green}
Under the assumptions of~\Cref{thm:sobolev-1-space}, the kernel $K$ in~\eqref{eq:W1-kernel} is the Green kernel of the PDE
\begin{align}
\begin{aligned} 
\label{eq:pde_green}
-\beta u''(x) &=f(x), \quad x\in(0,1), \\
\beta u'(0) &= \alpha_0 u(0) + \alpha_2 u(1), \\
\beta u'(1) &= -\alpha_1 u(1) - \alpha_2 u(0).
\end{aligned}
\end{align}
\end{corollary}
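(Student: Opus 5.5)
Given $f \in L_2(0,1)$, we show that $u(x) = \int_0^1 K(x,y) f(y) \dif y$ solves~\eqref{eq:pde_green}, using the structure of $K$ from \Cref{thm:sobolev-1-space}. Write $K(x,y) = \frac{1}{\beta\Delta}[a - b\max\{x,y\} + c\min\{x,y\} - d\,xy]$ with $a = \alpha_1\beta+\beta^2$, $b = (\alpha_1+\alpha_2)\beta$, $c = \alpha_0\alpha_1+\alpha_0\beta-\alpha_2^2+\alpha_2\beta$ and $d = \alpha_0\alpha_1-\alpha_2^2$. For fixed $y$, $K(\cdot,y)$ is affine on $[0,y]$ and on $[y,1]$ and continuous at $y$. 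Its slope is $(c - dy)/(\beta\Delta)$ on $(0,y)$ and $(-b-dy)/(\beta\Delta)$ on $(y,1)$. Hence the jump in $\partial_x K(\cdot,y)$ across $x=y$ is $-(b+c)/(\beta\Delta)$. The key identity is
\[
b + c = \alpha_0\alpha_1 - \alpha_2^2 + \beta(\alpha_0+\alpha_1+2\alpha_2) = \Delta ,
\]
so the jump equals $-1/\beta$. Distributionally, $-\beta\,\partial_x^2 K(\cdot,y) = \delta_y$, and this is the core of the Green-kernel property.

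Next we differentiate $u$ directly rather than only distributionally. Splitting the integral at $y = x$,
\[
u(x) = \frac{1}{\beta\Delta}\Big[\int_0^x (a - bx + cy - dxy) f(y)\dif y + \int_x^1 (a - by + cx - dxy) f(y) \dif y\Big].
\]
Since the integrand is continuous across $y=x$, the boundary terms from Leibniz's rule cancel. This gives
\[
u'(x) = \frac{1}{\beta\Delta}\Big[\int_0^x (-b - dy) f(y)\dif y + \int_x^1 (c - dy) f(y)\dif y\Big].
\]
Differentiating once more yields $u''(x) = \frac{1}{\beta\Delta}(-b-c)f(x) = -f(x)/\beta$ almost everywhere (everywhere if $f$ is continuous). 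This is the equation $-\beta u'' = f$.

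It then remains to verify the boundary conditions, by evaluating at $x=0,1$:
\[
u(0) = \tfrac{1}{\beta\Delta}\int (a - by) f, \quad u(1) = \tfrac{1}{\beta\Delta}\int (a - b + (c-d)y) f,
\]
\[
u'(0) = \tfrac{1}{\beta\Delta}\int (c-dy) f, \quad u'(1) = -\tfrac{1}{\beta\Delta}\int (b+dy) f.
\]
Since $f$ is arbitrary, each condition reduces to matching the coefficients of $\int f$ and $\int y f$. The condition $\beta u'(0) = \alpha_0 u(0) + \alpha_2 u(1)$ reduces to the two scalar identities
\[
\beta c = \alpha_0 a + \alpha_2(a-b), \qquad -\beta d = -\alpha_0 b + \alpha_2(c-d),
\]
and the condition at $1$ reduces to two analogous identities. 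Each is a polynomial identity in $\alpha_0,\alpha_1,\alpha_2,\beta$ that is checked by expanding. For example, $\alpha_0 a + \alpha_2(a-b) = \beta(\alpha_0\alpha_1 + \alpha_0\beta + \alpha_2\beta - \alpha_2^2) = \beta c$.

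The main obstacle is purely computational: carrying out these four coefficient identities without error. A conceptual cross-check also explains why they must hold. Integrating by parts in~\eqref{eq:rkhs-inner-product} shows that, for smooth $g$ satisfying the boundary conditions of~\eqref{eq:pde_green}, $\langle h, g\rangle = \langle h, -\beta g''\rangle_{L_2}$ for all $h$. Combined with the reproducing property, this forces $K$ to be the Green kernel. In that argument, the boundary terms $\beta(h(1)g'(1) - h(0)g'(0))$ cancel the $\b{A}$-terms exactly when the stated Robin-type conditions hold. Uniqueness of the solution follows because the problem's energy form is exactly the positive-definite inner product of \Cref{thm:sobolev-1-space}.
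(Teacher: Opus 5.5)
Your proof is correct but organized differently from the paper's. The paper never differentiates $Tf$. It quotes a characterization of the Green kernel of a second-order two-point problem: the unique function whose translates $G_x$ are continuous and affine on $[0,x]$ and $[x,1]$, have derivative jump $1/\beta$ at $x$, and satisfy the boundary conditions. It then checks these three properties for $K$. The analytic content (that such a $G$ reproduces solutions, and that the homogeneous problem is nondegenerate so that $G$ is unique) is delegated to the citation.

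You instead verify the defining property directly:
\begin{itemize}
\item Leibniz's rule gives $u=\int_0^1 K(\cdot,y)f(y)\dif y\in W_2^2(0,1)$ with $-\beta u''=f$ a.e.
\item The boundary conditions reduce to four coefficient identities, and all four hold. The three you did not write out follow immediately from $a-b=(\beta-\alpha_2)\beta$ and $c-d=(\alpha_0+\alpha_2)\beta$.
\item Uniqueness follows because a solution of the homogeneous problem satisfies $0=\int_0^1(-\beta u'')u\,\dif x=\langle u,u\rangle$, so $u=0$ by \Cref{thm:sobolev-1-space}.
\end{itemize}

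The algebra is the same in both routes. Your identity $b+c=\Delta$ is the paper's jump condition. By symmetry of $K$, your matching of the coefficients of $\int f$ and $\int yf$ is the paper's boundary check on $K_x$, written as an identity in $x$. The trade-off is that the paper's proof is shorter, while yours is self-contained. Yours also directly gives the statement for arbitrary $f\in L_2(0,1)$, with uniqueness, which is what the paper later relies on to identify $\rkhs_2=TL_2(0,1)$ with $\{u\in W_2^2(0,1):Bu=0\}$.

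Your integration-by-parts cross-check is a valid third route. On its own, though, it would first need existence of a $W_2^2$ solution before the reproducing property can be used.
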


\begin{proof}
Let $G$ be the Green kernel of~\eqref{eq:pde_green} and let $G_x(y)\coloneqq G(x, y)$.
Following Sections 6.2--6.4 in~\cite{fasshauer2015kernel}, $G$ is uniquely defined by the following properties for all $x\in(0,1)$: (i) $G_x$ is affine on $[0,x]$ and $[x,1]$, and continuous on $[0,1]$; (ii) $\beta(\lim_{y\to x^-}G_x'(y)-\lim_{y\to x^+}G_x'(y))=1$; and (iii) $G_x$ satisfies the boundary conditions of~\eqref{eq:pde_green}.
We show that the kernel $K$ of Theorem~\ref{thm:sobolev-1-space} satisfies these properties, so that $K=G$ by uniqueness.
The first property is clearly satisfied. To verify the second one we use~\eqref{eq:W1-kernel} to obtain
\begin{equation*} 
\beta \Delta K_x'(y) 
= y(\alpha_2^2 - \alpha_0\alpha_1)+
\begin{cases}
\alpha_0\alpha_1+\alpha_0\beta-\alpha_2^2+\alpha_2\beta &\text{ if } \: y \leq x, \\
-  (\alpha_1+\alpha_2)\beta  &\text{ if } \: y > x, 
\end{cases}
\end{equation*}
so that, using the definition of $\Delta$ in \Cref{thm:sobolev-1-space}, we get
\begin{equation*}
\beta\Delta\big(\lim_{y\to x^-}K_x'(y)-\lim_{y\to x^+}K_x'(y)\big)
= (\alpha_0\alpha_1+\alpha_0\beta-\alpha_2^2+\alpha_2\beta )+ (\alpha_1 + \alpha_2)\beta
= \Delta,
\end{equation*}
which gives Property (ii) by simplifying $\Delta$.
For the boundary conditions we compute
\begin{align*}
\beta\Delta&\left(\alpha_0 G_x(0) + \alpha_2 G_x(1)\right)\\
&=\alpha_0 \left((\alpha_1+\beta)\beta  - x (\alpha_1 + \alpha_2)\beta\right)\\
&\phantom{=}+\alpha_2 \big((\alpha_1+\beta)\beta + x(\alpha_2^2 - \alpha_0\alpha_1) - (\alpha_1+\alpha_2) \beta + x 
(\alpha_0\alpha_1+\alpha_0\beta-\alpha_2^2+\alpha_2\beta )\big)\\
&=x(\alpha_2^2-\alpha_0\alpha_1)\beta + (\alpha_0\alpha_1+ \alpha_0\beta+\alpha_2\beta-\alpha_2^2)\beta
\end{align*}
and
\begin{equation*}
\beta\Delta( \beta G_x'(0))
=x\beta(\alpha_2^2 - \alpha_0\alpha_1)+ (\alpha_0\alpha_1+\alpha_0\beta-\alpha_2^2+\alpha_2\beta)\beta,
\end{equation*}
showing that the left boundary condition is satisfied. The proof for the right boundary condition is similar, proving that $K=G$.
\end{proof}

\section{Convergence theory} \label{sec:superconvergence}

Superconvergence theory for kernel-based approximation allows deducing error estimates for functions in certain subspaces of $\rkhs$ from error estimates that hold for all $f \in \rkhs$~\cite{karvonen2025general}. We apply it to the kernel~\eqref{eq:W1-kernel} by leveraging its connection with Green kernels provided in Section~\ref{sec:green-kernels}.

Consider the integral operator $T$ given by
\begin{equation*}
    T f = \int_0^1 K(\cdot, y) f(y) \dif y  \quad \text{ for } \quad f \in L_2(0, 1).
\end{equation*}
Superconvergence theory provides error estimates for functions in the \emph{interpolation space} (defined via the $K$-method of real interpolation; see~\cite{triebel1978interpolation, karvonen2025general} for details)
\begin{equation*}
    \rkhs_\theta = (\rkhs, T L_2(0,1))_{\theta - 1, 2} \text{ for } \quad \theta \in [1, 2]
\end{equation*}
between the RKHS and the image of $L_2(0,1)$ under $T$.
We identify $\rkhs_1 = \rkhs$ and $\rkhs_2 = TL_2(0,1)$.
Interpolation spaces can be alternatively defined as \emph{power spaces} of the RKHS.
The following general superconvergence theorem follows from~\cite[Cor.\@~15]{karvonen2025general}.

\begin{theorem} \label{thm:superconvergence-general}
    If $\lVert f - P_n f \rVert_{L_2(0, 1)} \leq \varepsilon \lVert f \rVert_\rkhs$ for some $\varepsilon > 0$ and all $f \in \rkhs$, then
    \begin{equation*}
        \lVert u - P_n u \rVert_{L_2(0, 1)} \leq \varepsilon^{\theta} \lVert u \rVert_{\rkhs_{\theta}} 
    \end{equation*}
    for any $\theta \in [1, 2]$ and all $u \in \rkhs_{\theta}$.
\end{theorem}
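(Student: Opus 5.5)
The plan is the standard superconvergence argument: prove the endpoint $\theta = 2$ by a duality (doubling) trick, then obtain intermediate $\theta$ by interpolation. The case $\theta = 1$ is the hypothesis itself, since $\rkhs_1 = \rkhs$.

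For $\theta = 2$, take $u = Tv \in \rkhs_2$ with $v \in L_2(0,1)$, and normalise $\lVert u \rVert_{\rkhs_2} = \lVert v \rVert_{L_2(0,1)}$. The key identity is $\langle T v, g \rangle_\rkhs = \langle v, g \rangle_{L_2(0,1)}$ for every $g \in \rkhs$. This follows from the reproducing property by exchanging inner product and integral; it is legitimate because $K$ is bounded and continuous, so $T v = \int K_y v(y) \dif y$ is a Bochner integral in $\rkhs$. Set $e = u - P_n u$. Since $P_n$ is the $\rkhs$-orthogonal projection, $e \perp \operatorname{span}\{K_{x_i}\}$. We then compute
\[
\lVert e \rVert_\rkhs^2 = \langle u, e \rangle_\rkhs = \langle Tv, e \rangle_\rkhs = \langle v, e \rangle_{L_2(0,1)} \leq \lVert v \rVert_{L_2(0,1)} \lVert e \rVert_{L_2(0,1)} .
\]
Next apply the hypothesis to $e \in \rkhs$. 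Because $P_n e = 0$, it gives $\lVert e \rVert_{L_2(0,1)} \leq \varepsilon \lVert e \rVert_\rkhs$. Combining the two bounds yields $\lVert e \rVert_\rkhs \leq \varepsilon \lVert v \rVert_{L_2(0,1)}$, and hence $\lVert e \rVert_{L_2(0,1)} \leq \varepsilon^2 \lVert u \rVert_{\rkhs_2}$.

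For $\theta \in (1,2)$, use the $K$-method. The error map $I - P_n$ is linear. It is bounded from $\rkhs$ to $L_2(0,1)$ with norm at most $\varepsilon$, and from $\rkhs_2$ to $L_2(0,1)$ with norm at most $\varepsilon^2$. Note that $\rkhs_2 \hookrightarrow \rkhs$, so the pair is compatible. The exact interpolation property of the real method gives operator norm at most $\varepsilon^{2-\theta}(\varepsilon^2)^{\theta-1} = \varepsilon^\theta$ on $(\rkhs, \rkhs_2)_{\theta-1,2}$. This requires the normalisation of the $K$-functor under which the interpolation inequality holds with constant one. Alternatively, one can do this directly: split $u = u_1 + u_2$ in the $K$-functional, bound $\lVert (I-P_n)u \rVert_{L_2(0,1)} \leq \varepsilon \lVert u_1 \rVert_\rkhs + \varepsilon^2 \lVert u_2 \rVert_{\rkhs_2} = \varepsilon K(\varepsilon, u)$, and then use $K(t,u) \leq C t^{\theta-1} \lVert u \rVert_{\rkhs_\theta}$.

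The main obstacle is this last step, namely getting constant exactly one rather than some $C_\theta$. I would handle it by using the power-space description of $\rkhs_\theta$ via the spectral decomposition of $T$, which is compact, self-adjoint and positive on $L_2(0,1)$ with eigenpairs $(\lambda_j, \varphi_j)$. In that description $\lVert u \rVert_{\rkhs_\theta}^2 = \sum_j \lambda_j^{-\theta} \langle u, \varphi_j\rangle_{L_2}^2$, and it agrees with the $K$-method norm up to the normalisation adopted in \cite{karvonen2025general}. With this description the bound follows from Hölder's inequality in the form $\lVert A^{\theta-1} w \rVert \leq \lVert A w\rVert^{\theta-1}\lVert w\rVert^{2-\theta}$, using the self-adjoint operator $A = T^{-1/2}$ restricted to the range of $I - P_n$, exactly as in the cited corollary. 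I would defer this normalisation to \cite[Cor.~15]{karvonen2025general}.
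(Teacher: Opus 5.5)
Your argument is correct. Once the step you defer is filled in, it is fully self-contained, whereas the paper gives no argument for this statement and simply invokes Cor.~15 of \cite{karvonen2025general}. The endpoint $\theta=2$ is complete as you wrote it: the identity $\langle Tv,g\rangle_\rkhs=\langle v,g\rangle_{L_2(0,1)}$, then $P_ne=0$ for $e=u-P_nu$, giving $\lVert e\rVert_\rkhs\le\varepsilon\lVert v\rVert_{L_2(0,1)}$.

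The step you call the main obstacle, getting constant exactly one, is elementary. Your direct $K$-functional route closes the proof without power spaces or the citation. Put $\vartheta=\theta-1\in(0,1)$ and use the standard norm of \cite{triebel1978interpolation}, $\lVert u\rVert_{\vartheta,2}^2=\int_0^\infty \big(t^{-\vartheta}K(t,u)\big)^2\,\dif t/t$. Since $K(\cdot,u)$ is nondecreasing and $K(s,u)\ge (s/t)K(t,u)$ for $s\le t$, you have $K(s,u)\ge\min\{1,s/t\}\,K(t,u)$ for all $s>0$. Integrating in $s$ gives
\[
\lVert u\rVert_{\vartheta,2}^2 \ge K(t,u)^2\,t^{-2\vartheta}\int_0^\infty \sigma^{-2\vartheta}\min\{1,\sigma\}^2\,\frac{\dif\sigma}{\sigma} = \frac{t^{-2\vartheta}K(t,u)^2}{2\vartheta(1-\vartheta)} .
\]
Taking $t=\varepsilon$ and using your bound $\lVert u-P_nu\rVert_{L_2(0,1)}\le\varepsilon K(\varepsilon,u)$ yields
\[
\lVert u-P_nu\rVert_{L_2(0,1)} \le \sqrt{2(\theta-1)(2-\theta)}\;\varepsilon^{\theta}\lVert u\rVert_{\rkhs_\theta} \le \varepsilon^{\theta}\lVert u\rVert_{\rkhs_\theta}.
\]
With the normalised norm $\sqrt{2\vartheta(1-\vartheta)}\,\lVert u\rVert_{\vartheta,2}$ (the one for which $(X,X)_{\vartheta,2}=X$ isometrically), the constant is exactly one.

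This also shows your abstract interpolation-theorem version needs no special normalisation. With the unnormalised norm, $(L_2,L_2)_{\vartheta,2}$ is $L_2$ with its norm multiplied by $(2\vartheta(1-\vartheta))^{-1/2}\ge\sqrt{2}$, which only helps.

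I would drop the power-space paragraph, for three reasons:
\begin{itemize}
\item It is unnecessary once the constant is computed as above.
\item The power-space norm is only equivalent to the $K$-method norm, not a fixed multiple of it, so ``agrees up to normalisation'' is inaccurate.
\item The natural power-space argument goes through $\lVert e\rVert_\rkhs^2=\langle T^{-\theta/2}u,\,T^{-(2-\theta)/2}e\rangle_{L_2(0,1)}$. It therefore needs the moment inequality with exponent $2-\theta$ on $T^{-1/2}$, applied to $e$ itself, not exponent $\theta-1$.
\end{itemize}
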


To use \Cref{thm:superconvergence-general} we need an estimate on interpolation.
Recall that $\Delta_i = x_i - x_{i-1}$.
Let $h = \max\{ \Delta_i \}_{i=1}^{n}$ denote the fill-distance.

\begin{proposition} \label{prop:sampling-ineq}
    Let $p \in [1, \infty)$.
    If $f - P_n f \in W_p^1(0, 1)$, then 
    \begin{equation*}
        \lVert f - P_n f \rVert_{L_p(0,1)} \leq \frac{1}{p^{1/p}} \cdot h  \cdot \lvert f \rvert_{W_p^1(0, 1)} .
    \end{equation*}    
\end{proposition}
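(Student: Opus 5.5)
Write $e = f - P_n f$. Because $P_n f$ interpolates $f$, the error vanishes at every node: $e(x_i) = 0$ for $i \in \{0, \ldots, n\}$. Since $e \in W_p^1(0,1)$ is absolutely continuous, the plan is a local zero-boundary Poincaré-type estimate on each subinterval $[x_{i-1}, x_i]$, summed over $i$. This needs neither Theorem~\ref{thm:piecewise-linear} nor any property of the kernel beyond interpolation at the nodes.

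Fix $i$ and $x \in [x_{i-1}, x_i]$. Using $e(x_{i-1}) = 0$, the fundamental theorem of calculus and H\"older's inequality with exponents $p$ and $p' = p/(p-1)$ give
\begin{equation*}
    \lvert e(x) \rvert = \bigg\lvert \int_{x_{i-1}}^x e'(z) \dif z \bigg\rvert \leq (x - x_{i-1})^{1 - 1/p} \bigg( \int_{x_{i-1}}^{x_i} \lvert e'(z) \rvert^p \dif z \bigg)^{1/p}.
\end{equation*}
For $p=1$ the prefactor is $1$. Raising to the $p$th power and integrating over $x \in [x_{i-1}, x_i]$ yields
\begin{equation*}
    \int_{x_{i-1}}^{x_i} \lvert e(x) \rvert^p \dif x \leq \int_{x_{i-1}}^{x_i} (x - x_{i-1})^{p-1} \dif x \cdot \lVert e' \rVert_{L_p(x_{i-1}, x_i)}^p = \frac{\Delta_i^p}{p} \lVert e' \rVert_{L_p(x_{i-1}, x_i)}^p.
\end{equation*}
Bounding $\Delta_i \leq h$ and summing over $i$ gives $\lVert e \rVert_{L_p(0,1)}^p \leq (h^p/p) \lVert e' \rVert_{L_p(0,1)}^p$, that is, $\lVert e \rVert_{L_p(0,1)} \leq p^{-1/p} h \lvert e \rvert_{W_p^1(0,1)}$.

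The remaining and main step is to replace $\lvert e \rvert_{W_p^1}$ by $\lvert f \rvert_{W_p^1}$. On each subinterval, Theorem~\ref{thm:piecewise-linear} shows that $P_n f$ is affine with slope equal to the mean of $f'$ there; we may take $f \in \mathcal{H} = W_2^1$, and otherwise use only that $P_n f$ is affine on each $[x_{i-1},x_i]$, which holds by 2-piecewise linearity. Hence
\begin{equation*}
    e' = f' - \frac{1}{\Delta_i}\int_{x_{i-1}}^{x_i} f'(z) \dif z \quad \text{ on } (x_{i-1}, x_i),
\end{equation*}
and subtracting the mean is a contraction in $L_p$ for $p=2$, where it is an orthogonal projection. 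For general $p$ this contraction fails and one only gets the factor $2$. I would therefore recheck the intended constant. If it must hold for all $p$, the better route is to avoid $e'$ altogether: write $e(x)$ as an integral of $f'$ against the kernel $\mathbf{1}_{[x_{i-1},x]} - (x-x_{i-1})/\Delta_i$, whose absolute value is at most $1$, and bound that integral by H\"older directly. This is the step I expect to be delicate.
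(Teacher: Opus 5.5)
Your first step is exactly the paper's proof: the fundamental theorem of calculus from the left node, H\"older's inequality, $\int_{x_{i-1}}^{x_i}(x-x_{i-1})^{p-1}\dif x=\Delta_i^p/p$, and summation over subintervals. The paper ends there. It proves $\lVert u\rVert_{L_p(0,1)}\le p^{-1/p}h\lvert u\rvert_{W_p^1(0,1)}$ for $u=f-P_nf$ and declares the claim, so it never addresses the passage from $\lvert f-P_nf\rvert_{W_p^1(0,1)}$ to $\lvert f\rvert_{W_p^1(0,1)}$ that you single out. Your diagnosis is correct. The passage needs $P_nf$ to be affine between consecutive nodes, which fails for the general kernels allowed in \Cref{cor:superconvergence-sobolev-1}. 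Even then, subtracting the mean of $f'$ is an $L_p$-contraction only for $p=2$. For instance, with $p=1$ and $g=\mathbf{1}_{[0,\epsilon]}$ on $[0,1]$, $\lVert g-\epsilon\rVert_{L_1(0,1)}=2\epsilon(1-\epsilon)>\epsilon=\lVert g\rVert_{L_1(0,1)}$ for $\epsilon<1/2$. Your $p=2$ argument is fine. Moreover, the only later use of the proposition ($p=2$ in \Cref{cor:superconvergence-sobolev-1}) needs only the version with $\lvert f-P_nf\rvert_{W_2^1(0,1)}$. Indeed, for $f\in\rkhs$, $\lvert f-P_nf\rvert_{W_2^1(0,1)}\le c\lVert f-P_nf\rVert_{\rkhs}\le c\lVert f\rVert_{\rkhs}$ for any kernel, since $P_n$ is an $\rkhs$-orthogonal projection.

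The gap in your proposal is the case $p\neq2$, which you leave open. The route as you phrase it also falls short of the stated constant. Using only $\lvert k(x,z)\rvert\le1$ for $k(x,z)=\mathbf{1}_{[x_{i-1},x]}(z)-(x-x_{i-1})/\Delta_i$ gives $\lvert e(x)\rvert\le\Delta_i^{1/p'}\lVert f'\rVert_{L_p(x_{i-1},x_i)}$, hence constant $1$ rather than $p^{-1/p}$. You must keep the exact $L_{p'}$-norm of $k(x,\cdot)$. With $t=(x-x_{i-1})/\Delta_i$ one has $e(x)=(1-t)\int_{x_{i-1}}^x f'(z)\dif z-t\int_x^{x_i}f'(z)\dif z$. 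H\"older followed by integration in $x$ gives, for $p>1$,
\[
\lVert e\rVert_{L_p(x_{i-1},x_i)}^p\le\Delta_i^p\,\lVert f'\rVert_{L_p(x_{i-1},x_i)}^p\int_0^1 g(t)^{p-1}\dif t,
\qquad
g(t)=t(1-t)\big[(1-t)^{p'-1}+t^{p'-1}\big].
\]
It then suffices to check $\int_0^1 g(t)^{p-1}\dif t\le 1/p$.
For $1<p\le2$, $p'-1\ge1$, so the bracket is at most $1$, $g\le1/4$, and $4^{1-p}\le1/p$.
For $p\ge2$, concavity of $s\mapsto s^{p'-1}$ bounds the bracket by $2^{2-p'}$, so $g\le2^{-p'}$ and $g^{p-1}\le2^{-p}\le1/p$.
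For $p=1$, use $\sup_z\lvert k(x,z)\rvert=\max\{t,1-t\}$, which gives constant $3/4\le1$.
Summing over $i$ with $\Delta_i\le h$ proves the proposition as stated for every $p\in[1,\infty)$ whenever $P_nf=L_nf$. So the constant $p^{-1/p}$ does not need revising; what your proposal lacks is this computation.
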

\begin{proof}
    Let $u \in W_p^1(0, 1)$ be any function that vanishes at $a = x_{i-1}$ and $b = x_i$ and let $p' = p/(p-1)$ be the Hölder conjugate of $p$.
    Then
    \begin{equation*}
        \begin{split}
        \lVert u \rVert_{L_p(a,b)}^p 
        = \int_a^b \bigg\lvert \int_a^x u'(z) \dif z \bigg\rvert^p \dif x &\leq \int_a^b \bigg( \int_a^x 1 \dif z \bigg)^{p-1} \bigg( \int_a^x \lvert u'(z) \rvert^p \dif z \bigg) \dif x \\
        &\leq \lvert u \rvert_{W_p^1(a,b)}^p \int_a^b (x - a)^{p-1} \dif x \\
        &\leq \frac{h^{p}}{p} \lvert u \rvert_{W_p^1(a,b)}^p 
        \end{split}
    \end{equation*}
    by the fundamental theorem of calculus and Hölder's inequality.
    The claim follows from
    \begin{equation*}
        \lVert u \rVert_{L_p(0,1)}^p \leq \frac{h^p}{p} \sum_{i=1}^n \lvert u \rvert_{W_p^1(x_{i-1},x_i)}^p = \frac{h^p}{p} \lvert u \rvert_{W_p^1(0, 1)}^p. \qedhere
    \end{equation*}
\end{proof}

\Cref{prop:sampling-ineq} is an example of a sampling inequality, much more general versions of which can be found in~\cite{arcangeli2007sampling, wendland2005sampling}.
\Cref{thm:superconvergence-general} and \Cref{prop:sampling-ineq} yield the following general superconvergence result for any kernel whose RKHS embeds continuously in $W_2^1(0,1)$.

\begin{corollary} \label{cor:superconvergence-sobolev-1}
    Let $K$ be any positive-semidefinite kernel on $[0, 1]$ such that $\rkhs \subset W_2^1(0,1)$ and $\lvert f \rvert_{W_2^1(0,1)} \leq c \lVert f \rVert_\rkhs$ for some $c \geq 0$ and all $f \in \rkhs$.
    Then
    \begin{equation*}
        \lVert u - P_n u \rVert_{L_2(0,1)} \leq \bigg( \frac{c}{\sqrt{2}} \bigg)^{\theta} h^{\theta} \lVert u \rVert_{\rkhs_{\theta}}
    \end{equation*}
    for any $\theta \in [1, 2]$ and all $u \in \rkhs_{\theta}$.
\end{corollary}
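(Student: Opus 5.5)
The plan is to combine \Cref{prop:sampling-ineq} (with $p = 2$) and the embedding hypothesis to get an $\rkhs$-level estimate, and then lift it with \Cref{thm:superconvergence-general}. Fix $f \in \rkhs$. The interpolant $P_n f$ is a finite linear combination of kernel translates $K_{x_i} \in \rkhs$, so $P_n f \in \rkhs \subset W_2^1(0,1)$, and therefore $f - P_n f \in W_2^1(0,1)$. This is exactly what \Cref{prop:sampling-ineq} needs: the error vanishes at every node $x_i$ because $P_n f$ interpolates $f$. (If $\b{K}$ is singular, we take $P_n f$ to be the orthogonal projection, which still interpolates on $\rkhs$.) Applying the proposition with $p = 2$ gives
\begin{equation*}
    \lVert f - P_n f \rVert_{L_2(0,1)} \leq \frac{h}{\sqrt{2}} \, \lvert f - P_n f \rvert_{W_2^1(0,1)}.
\end{equation*}

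Next I would use the embedding hypothesis on $f - P_n f \in \rkhs$ to get $\lvert f - P_n f \rvert_{W_2^1(0,1)} \leq c \lVert f - P_n f \rVert_\rkhs$. Because $P_n$ is the orthogonal projection in $\rkhs$ onto the span of the translates, $\lVert f - P_n f \rVert_\rkhs \leq \lVert f \rVert_\rkhs$. Together these yield
\begin{equation*}
    \lVert f - P_n f \rVert_{L_2(0,1)} \leq \frac{c h}{\sqrt{2}} \lVert f \rVert_\rkhs \quad \text{for all } f \in \rkhs.
\end{equation*}

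Now apply \Cref{thm:superconvergence-general} with $\varepsilon = ch/\sqrt{2}$. This gives $\lVert u - P_n u \rVert_{L_2(0,1)} \leq (c/\sqrt{2})^\theta h^\theta \lVert u \rVert_{\rkhs_\theta}$ for every $\theta \in [1,2]$ and $u \in \rkhs_\theta$, which is the claim.

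Two small issues need attention. First, the theorem asks for $\varepsilon > 0$; when $c = 0$ I would apply it with any $\varepsilon > 0$ and let $\varepsilon \to 0$. Second, the main point needing care is that $\rkhs_\theta \subset \rkhs$, so that $P_n u$ is well defined and the theorem applies. This holds by the construction of the interpolation spaces (they are intermediate between $TL_2 \subset \rkhs$ and $\rkhs$), so I expect no real obstacle.
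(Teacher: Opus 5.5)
Your proof is correct and follows the paper's route: the sampling inequality (\Cref{prop:sampling-ineq}) with $p=2$, then the embedding $\lvert\cdot\rvert_{W_2^1(0,1)}\le c\lVert\cdot\rVert_\rkhs$, then \Cref{thm:superconvergence-general} with $\varepsilon = ch/\sqrt{2}$. You apply the sampling inequality to the error $f-P_nf$, which is what its proof actually controls, and then use $\lVert f-P_nf\rVert_\rkhs\le\lVert f\rVert_\rkhs$ from the orthogonal-projection property; this is the careful way to make that step rigorous for a general kernel.
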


As $\langle \b{B}(f), \b{A}\b{B}(f) \rangle_{\R^2} \geq 0$ if the matrix $\b{A}$ is positive-semidefinite, \Cref{cor:superconvergence-sobolev-1} applies to the kernel in~\eqref{eq:W1-kernel} with $c = 1/\sqrt{\beta}$.

Because the power spaces $\rkhs_{\theta}$ are rather abstract, it is desirable to describe them in more concrete terms. Using \Cref{cor:green} we can completely characterize them as follows.
Note that in the proposition we are omitting the case $\theta=3/2$, which is critical because it is the threshold value for the regularity of the trace operator. This implies that the boundary conditions are not simply active or inactive, but need instead to be enforced in some weak sense. We refer to Theorem 1 in~\cite[Sec.\@~4.3.3]{triebel1978interpolation} and Remark 4 in~\cite[Section 4.3.3]{triebel1978interpolation} for further details.

\begin{proposition}
Under the assumptions of~\Cref{thm:piecewise-linear}, the power space $\rkhs_\theta$ for the kernel $K$ in~\eqref{eq:W1-kernel} is
\begin{equation*}
\mathcal H_\theta
= 
\begin{cases}
W_2^{\theta}(0,1) &\text{ if } \: \theta \in[1,3/2), \\
\{u\in W_2^{\theta}(0,1): u~ \text{satisfies the boundary conditions \eqref{eq:pde_green}}\} &\text{ if } \: \theta \in( 3/2,2],
\end{cases}
\end{equation*}
where equality is in the sense of norm equivalence.
\end{proposition}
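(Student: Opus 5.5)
The plan is to identify the two endpoint spaces concretely and then apply a known interpolation result for Sobolev spaces with boundary conditions.

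\emph{Endpoint $\theta=1$.} By \Cref{thm:sobolev-1-space}, $\rkhs_1=\rkhs$ is $W_2^1(0,1)$ with an equivalent norm.

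\emph{Endpoint $\theta=2$.} By \Cref{cor:green}, $K$ is the Green kernel of \eqref{eq:pde_green}. Hence for $f\in L_2(0,1)$ the function $u=Tf$ is the unique solution of $-\beta u''=f$ with the Robin-type boundary conditions. So $TL_2(0,1)$ is contained in
\[
W_{2,\mathrm{bc}}^2:=\{u\in W_2^2(0,1): u \text{ satisfies the boundary conditions in \eqref{eq:pde_green}}\}.
\]
Conversely, any $u\in W_{2,\mathrm{bc}}^2$ satisfies $u=T(-\beta u'')$. This follows from uniqueness of the boundary value problem: the homogeneous problem has only the trivial solution, because an affine function satisfying the boundary conditions has zero $\rkhs$-norm after integration by parts against itself. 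Therefore $TL_2(0,1)=W_{2,\mathrm{bc}}^2$ as sets.

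For the norms, the range norm is $\lVert Tf\rVert_{TL_2}=\lVert f\rVert_{L_2}=\beta\lVert u''\rVert_{L_2}$. This is equivalent to $\lVert u\rVert_{W_2^2}$ on $W_{2,\mathrm{bc}}^2$. One direction is immediate. For the other, I would use a compactness/Poincaré argument, or more directly the boundedness of $T\colon L_2\to W_2^2$, which holds because $K$ is explicit and piecewise affine. If the paper's power-space norm is instead defined through $T^{1/2}$, I would check that the two descriptions coincide, using that $T$ is self-adjoint and positive on $L_2$ and that $\rkhs=T^{1/2}L_2(0,1)$.

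\emph{Interpolation.} With both endpoints identified,
\[
\rkhs_\theta=\bigl(W_2^1(0,1),\,W_{2,\mathrm{bc}}^2\bigr)_{\theta-1,2},
\]
up to norm equivalence, since real interpolation respects equivalent norms. For the case without boundary conditions, $(W_2^1,W_2^2)_{\theta-1,2}=W_2^\theta$ by the standard interpolation of Sobolev (Besov) spaces on an interval. The boundary conditions have the form $\mathcal{B}u:=\beta u'(0)-\alpha_0u(0)-\alpha_2u(1)=0$, together with the analogous condition at $x=1$. These are first-order normal-derivative traces plus lower-order terms. Triebel's Theorem~1 in \cite[Sec.~4.3.3]{triebel1978interpolation} describes the interpolation between $W_2^{s_0}$ and the subspace of $W_2^{s_1}$ cut out by normal boundary operators. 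It gives exactly the subspace of $W_2^\theta$ where those conditions are imposed, namely those whose order $m_j=1$ satisfies $m_j<\theta-1/2$. This holds for $\theta>3/2$ and fails for $\theta<3/2$, which yields the two cases of the statement.

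\emph{Main obstacle.} The difficulty is fitting Triebel's hypotheses. His result concerns normal systems of boundary operators on smooth domains with local boundary conditions. Here the conditions couple $u(0)$ and $u(1)$ through $\alpha_2$, and the lower endpoint is $W_2^1$ rather than $L_2$.

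I would handle the nonlocal coupling by a finite-rank perturbation. Choose an isomorphism $S$ of $W_2^1$ and of $W_2^2$, for instance $u\mapsto u+\phi_0\,c_0(u)+\phi_1\,c_1(u)$ with smooth bump functions $\phi_0,\phi_1$ supported near the endpoints and coefficients $c_0,c_1$ built from the point values $u(0),u(1)$. The $\alpha_2$ terms involve only point evaluations, which are bounded on both endpoint spaces, so $S$ can be chosen to map the local Robin-condition space onto $W_{2,\mathrm{bc}}^2$. Interpolation then commutes with $S$.

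For the lower endpoint $W_2^1$, I would apply reiteration, writing $W_2^1=(L_2,W_2^2)_{1/2,2}$, and then invoke Triebel's result with the pair $(L_2,W_{2,\mathrm{bc}}^2)$.
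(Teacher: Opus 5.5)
Your proposal is correct and follows the paper's route: identify $\rkhs_2=TL_2(0,1)$ with $\{u\in W_2^2(0,1): Bu=0\}$, where $B$ encodes the boundary conditions in \eqref{eq:pde_green}, via \Cref{cor:green}; pass to the couple $(L_2(0,1),\rkhs_2)_{\theta/2,2}$; and read off both cases from Theorem~1 in \cite[Sec.~4.3.3]{triebel1978interpolation}. The paper takes that couple from the power-space identity in \cite{karvonen2025general} rather than by reiteration, and it omits the reverse inclusion, the norm equivalence at $\theta=2$ and the nonlocal $\alpha_2$-coupling that you address.

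One detail to fix: reiteration requires $(L_2(0,1),X)_{1/2,2}=W_2^1(0,1)$ for the boundary-condition space $X$ you actually interpolate with (the case $\sigma=1/2<3/4$ of Triebel's theorem), not merely $(L_2,W_2^2)_{1/2,2}=W_2^1$. Also, your perturbation $S$ uses point evaluations and is not bounded on $L_2(0,1)$, so you must apply $S$ to the couple $(W_2^1,\cdot)$ first. Only then reiterate, and do so with $X$ equal to the $W_2^2$ functions satisfying the local conditions ($\alpha_2=0$), not with $W^2_{2,\mathrm{bc}}$.
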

\begin{proof}
The statement is clear for $\theta=1$. For $\theta=2$ we use~\Cref{cor:green} and observe that for any $f\in L_2(\Omega)$ the function \smash{$u = \int_0^1 K(\cdot,y) f(y) \dif y = T f$} solves the PDE in~\eqref{eq:pde_green} with right-hand side $f$, implying that $\rkhs_2=TL_2(0, 1)$ is the set of solutions of~\eqref{eq:pde_green}. More precisely, letting $B(u)$ be the boundary operator encoding the boundary conditions of~\eqref{eq:pde_green}, we have
\begin{equation*}
\rkhs_2
= W_{2,B}^\tau(0,1)\coloneqq \{ u \in W_2^2(0, 1) : Bu=0\}.
\end{equation*}
For $\theta\in(1,2)$ we have $\rkhs_\theta = (L_2(0,1), \rkhs_2)_{\theta/2, 2}$ with equivalent norms~\cite{karvonen2025general}, and to characterize this interpolation space we use a standard argument on the interpolation of the domain of elliptic operators.
Specifically, Theorem 1 in~\cite[Sec.\@~4.3.3]{triebel1978interpolation} ensures that for $0<\sigma<1$ we have
\begin{equation*}
(L_2(0,1), W_{2,B}^\tau(0,1))_{\sigma, 2}
=
\begin{cases}
W_2^{2\sigma}(0, 1) &\text{if } \sigma < 3/4,\\    
\{u\in W_2^{2\sigma}(0, 1): B(u)=0\} &\text{if } \sigma> 3/4, 
\end{cases}
\end{equation*}
concluding the proof.
\end{proof}

Having characterized $\rkhs_{\theta}$ for kernels from \Cref{thm:sobolev-1-space}, we obtain the following general superconvergence statement. 

\begin{corollary} \label{cor:superconvergence-W1}
Suppose that the assumptions of \Cref{thm:sobolev-1-space} hold. 
Let $K$ be the kernel in~\eqref{eq:W1-kernel} and $\theta \in [1,2] \setminus \{3/2\}$.
Then
\begin{equation*}
    \lVert u - P_n u \rVert_{L_2(0,1)} \leq \bigg( \frac{1}{\sqrt{2 \beta}} \bigg)^{\theta} h^{\theta} \lVert u \rVert_{\rkhs_{\theta}}
\end{equation*}
for all $u \in W_2^{\theta}(0,1)$ that satisfy the boundary conditions~\eqref{eq:pde_green} if $\theta > 3/2$ and for all $u \in W_2^{\theta}(0, 1)$ if $\theta < 3/2$.
\end{corollary}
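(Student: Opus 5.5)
The corollary should follow by combining \Cref{cor:superconvergence-sobolev-1} with the characterization of the power spaces in the preceding proposition; no new analysis is needed.

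The first step is to check the hypothesis of \Cref{cor:superconvergence-sobolev-1} for the kernel in~\eqref{eq:W1-kernel}. By \Cref{thm:sobolev-1-space}, under the stated assumptions on $\beta$ and $\b{A}$ the RKHS $\rkhs$ is $W_2^1(0,1)$ equipped with the inner product~\eqref{eq:rkhs-inner-product}, so $\rkhs \subset W_2^1(0,1)$. For every $f \in \rkhs$,
\begin{equation*}
    \beta \lvert f \rvert_{W_2^1(0,1)}^2 = \lVert f \rVert_\rkhs^2 - \langle \b{B}(f), \b{A}\b{B}(f) \rangle_{\R^2} \leq \lVert f \rVert_\rkhs^2 .
\end{equation*}
The inequality holds because the boundary quadratic form is nonnegative. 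If $\b{A}$ is positive-definite this is immediate. If $\b{A}$ is diagonal with $\alpha_0, \alpha_1 \geq 0$, the form is $\alpha_0 f(0)^2 + \alpha_1 f(1)^2 \geq 0$. Hence $c = 1/\sqrt{\beta}$ works.

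The second step is to apply \Cref{cor:superconvergence-sobolev-1}. With this $c$ we get $(c/\sqrt 2)^\theta = (2\beta)^{-\theta/2}$, so for every $\theta \in [1,2]$ and $u \in \rkhs_\theta$,
\begin{equation*}
    \lVert u - P_n u \rVert_{L_2(0,1)} \leq \big( 2\beta \big)^{-\theta/2} h^\theta \lVert u \rVert_{\rkhs_\theta}.
\end{equation*}
This is the claimed bound, stated for $u \in \rkhs_\theta$.

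The third step is to identify $\rkhs_\theta$ concretely, which is the only real content. The preceding proposition shows that, up to norm equivalence, $\rkhs_\theta = W_2^\theta(0,1)$ for $\theta \in [1,3/2)$. For $\theta \in (3/2,2]$ it shows that $\rkhs_\theta$ is the subspace of $W_2^\theta(0,1)$ satisfying the boundary conditions of~\eqref{eq:pde_green}. That proposition was stated "under the assumptions of \Cref{thm:piecewise-linear}", but its proof only uses \Cref{cor:green}, which holds under the assumptions of \Cref{thm:sobolev-1-space}. So it applies here.

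As sets, the function classes in the corollary therefore coincide with $\rkhs_\theta$, and every such $u$ lies in $\rkhs_\theta$. The norm $\lVert u \rVert_{\rkhs_\theta}$ is finite and equivalent to the $W_2^\theta$ norm, so the inequality holds with the $\rkhs_\theta$ norm exactly as stated. The value $\theta = 3/2$ is excluded because that characterization is unavailable there.

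The main point requiring care is this last identification: the statement is phrased with the $\rkhs_\theta$ norm precisely so that no equivalence constants enter. Beyond that, the argument is bookkeeping.
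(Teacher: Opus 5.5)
Your proposal is correct and matches the paper's argument, which the paper only sketches: nonnegativity of $\langle \b{B}(f), \b{A}\b{B}(f)\rangle_{\R^2}$ gives $c = 1/\sqrt{\beta}$ in \Cref{cor:superconvergence-sobolev-1}, hence the constant $(2\beta)^{-\theta/2}$, and the preceding characterization of $\rkhs_\theta$ then identifies $\rkhs_\theta$ with the stated Sobolev classes. You also correctly note that this characterization needs only the hypotheses of \Cref{thm:sobolev-1-space} (via \Cref{cor:green}), not those of \Cref{thm:piecewise-linear}, which is evidently a mis-reference in the paper.
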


Since the kernel in~\eqref{eq:W1-kernel} is 2-piecewise linear and its kernel interpolants hence piecewise linear interpolants by \Cref{thm:piecewise-linear}, \Cref{cor:superconvergence-W1} is a variant of classical convergence results for linear splines.
These results are briefly reviewed in the next section.

\section{Existing convergence theory} \label{sec:classical-bounds}

This section collects existing error bounds for piecewise linear interpolation and the trapezoidal rule, which is an integral approximation obtained by integrating the interpolant.
The theorems collected here should be compared to those in \Cref{sec:superconvergence}.
Here $C^{s,\alpha}(0, 1)$ is the space of functions that are $s$ times continuously differentiable and whose $s$th derivative is $\alpha$-Hölder continuous.
The Hölder seminorm is
\begin{equation*}
    \lvert f \rvert_{C^{s,\alpha}(0,1)} = \sup_{x \neq y} \frac{\lvert f^{(s)}(x) - f^{(s)}(y) \rvert}{\lvert x - y \rvert^\alpha} .
\end{equation*}
Let $p, q \in [1, \infty]$ and $\theta \in (0, 1)$. The Besov space $B_{p,q}^\sigma(0, 1)$ can be obtained from interpolation of Sobolev spaces (recall \Cref{sec:superconvergence}):
\begin{equation*}
    B_{p,q}^\sigma(0, 1) = (W_p^{s_0}(0,1), W_p^{s_1}(0,1))_{\theta,q} \quad \text{ for } \quad \sigma = \theta s_1 + (1 - \theta) s_0
\end{equation*}
if $s_0 \neq s_1$, where the norms are equivalent.
Besov spaces coincide with fractional Sobolev spaces when $p = q$.
That is, $B_{p,p}^\sigma(0, 1) = W_p^\sigma(0, 1)$.
See~\cite{triebel1978interpolation} for full theory of Besov and Sobolev spaces.
Recall that $\Delta_i = x_i - x_{i-1}$ and $h = \max\{ \Delta_i \}_{i=1}^n$.
We use $\lesssim$ to denote an inequality that holds up a multiplicative constant independent of $h$ and~$f$.

\subsection{Piecewise linear interpolation}

The following theorem is a combination of Corollary~3.3 for Parts~(b) and~(f), Theorem~3.5 from \cite{SwartzVarga1972} for Parts~(a) and~(c), and results in~\cite{HedstromVarga1971} (see Theorem~4.6 and p.\@~314) for Parts~(d) and~(e).
We use the notation $(x)_+ = \max\{x, 0\}$.

\begin{theorem} \label{thm:spline-rates}
    Let $p,q \in [1, \infty]$ and $r \in [2, \infty]$.
    \begin{enumerate}
        \item[(a)] If $f \in C^{0,\alpha}(0, 1)$ for $\alpha \in (0, 1]$, then
        \begin{equation*}
            \lVert f - L_n f \rVert_{L_r(0, 1)} \lesssim h^{\alpha - (1/2 - 1/r)_+} \lvert f \rvert_{C^{0,\alpha}(0, 1)}.
        \end{equation*}
        \item[(b)] If $f \in W^1_p(0, 1)$, then
        \begin{equation*}
            \lVert f - L_n f \rVert_{L_r(0, 1)} \lesssim h^{1 - (1/p - 1/r)_+} \lvert f \rvert_{W^1_p(0, 1)}.
        \end{equation*}                
        \item[(c)] If $f \in C^{1,\alpha}(0, 1)$ for $\alpha \in (0, 1]$, then
        \begin{equation*}
            \lVert f - L_n f \rVert_{L_r(0, 1)} \lesssim h^{(1+\alpha) - (1/2 - 1/r)_+} \lvert f \rvert_{C^{1,\alpha}(0, 1)}.
        \end{equation*}
        \item[(d)] If $f \in B_{p,q}^\sigma(0, 1)$ for $\sigma \in (1, 2)$, then
        \begin{equation*}
            \lVert f - L_n f \rVert_{L_r(0, 1)} \lesssim h^{\sigma - (1/p - 1/r)_+} \lVert f \rVert_{B_{p,q}^\sigma(0,1)} .
        \end{equation*}
        \item[(e)] In particular, if $f \in W_{p}^\sigma(0, 1)$ for $\sigma \in (1, 2)$, then
        \begin{equation*}
            \lVert f - L_n f \rVert_{L_r(0, 1)} \lesssim h^{\sigma - (1/p - 1/r)_+} \lVert f \rVert_{W_{p}^\sigma(0,1)} .
        \end{equation*}
        \item[(f)] If $f \in W^2_p(0, 1)$, then
        \begin{equation*}
             \lVert f - L_n f \rVert_{L_r(0, 1)} \lesssim h^{2 - (1/p - 1/r)_+ } \lvert f \rvert_{W^2_p(0,1)}.
         \end{equation*}
    \end{enumerate}
\end{theorem}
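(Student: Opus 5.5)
The theorem collects classical error bounds for piecewise linear interpolation, and the paper attributes each part to \cite{SwartzVarga1972} and \cite{HedstromVarga1971}. I would nevertheless give a self-contained argument built around one local estimate per subinterval. The estimates are then summed, with the change of $L_p$-norm handled by Hölder's inequality. The reference case is $r=p$; the exponent penalty $(1/p-1/r)_+$ comes only from comparing a local $L_r$ norm with a local $L_p$ seminorm on an interval of length $\Delta_i\le h$.

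\textbf{Steps (b) and (f): the integer cases.} Fix $I_i=[x_{i-1},x_i]$ and set $e=f-L_nf$, which vanishes at both endpoints of $I_i$.
\begin{itemize}
\item For (b) I would use $e'=f'-c_i$ on $I_i$, where $c_i$ is the mean of $f'$ over $I_i$. This gives $\lvert e'\rvert_{L_p(I_i)}\le 2\lvert f\rvert_{W_p^1(I_i)}$.
\item Since $e$ vanishes at $x_{i-1}$, Hölder's inequality gives the pointwise bound $\lvert e(x)\rvert\le \Delta_i^{1-1/p}\lVert e'\rVert_{L_p(I_i)}$, as in the proof of \Cref{prop:sampling-ineq}. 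Hence $\lVert e\rVert_{L_r(I_i)}\lesssim \Delta_i^{1-1/p+1/r}\lvert f\rvert_{W_p^1(I_i)}$.
\item For (f), Rolle's theorem gives a zero $\xi$ of $e'$ in $I_i$. Writing $e'(x)=\int_\xi^x f''$ gives $\lVert e'\rVert_{L_\infty(I_i)}\le \Delta_i^{1-1/p}\lVert f''\rVert_{L_p(I_i)}$. Applying the previous step once more yields $\lVert e\rVert_{L_r(I_i)}\lesssim \Delta_i^{2-1/p+1/r}\lvert f\rvert_{W_p^2(I_i)}$.
\end{itemize}

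\textbf{Summation.} I then sum the $r$-th powers over $i$.
\begin{itemize}
\item If $r\ge p$, I use $\sum_i a_i^{r/p}\le(\sum_i a_i)^{r/p}$ and $\Delta_i\le h$. The full $h^{-(1/p-1/r)}$ loss appears, giving the exponent $s-(1/p-1/r)$ for $s\in\{1,2\}$.
\item If $r<p$, I apply Hölder's inequality across the subintervals, using $\sum_i\Delta_i=1$, and there is no loss.
\end{itemize}
This accounts for the $(\cdot)_+$ in the exponent. The endpoints $p=\infty$ or $r=\infty$ follow by the same estimates with maxima in place of sums.

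\textbf{Steps (a) and (c): the Hölder cases.} These are pointwise versions of the same argument.
\begin{itemize}
\item For (a), $\lvert e\rvert\le 2\lvert f\rvert_{C^{0,\alpha}}\Delta_i^\alpha$ on $I_i$, because $L_nf$ is a convex combination of the endpoint values.
\item For (c), the mean value theorem gives $L_nf'=f'(\eta_i)$ on $I_i$ for some $\eta_i\in I_i$. Hence $\lvert e'\rvert\le\lvert f\rvert_{C^{1,\alpha}}\Delta_i^\alpha$, and so $\lvert e\rvert\le \Delta_i^{1+\alpha}\lvert f\rvert_{C^{1,\alpha}}$.
\end{itemize}
These bounds give the rates $h^\alpha$ and $h^{1+\alpha}$ in every $L_r$. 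The stated exponents, which carry an extra $(1/2-1/r)_+$, are weaker than this and therefore follow a fortiori.

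\textbf{Steps (d) and (e): the fractional cases, which are the main obstacle.} I would interpolate between (b) and (f) with $r$ fixed. The operator $E=I-L_n$ has two relevant properties.
\begin{itemize}
\item It is linear on $W_p^1$ (for $p>1$ point evaluations are bounded; for $p=1$, $W_1^1\subset C$ still holds).
\item It is bounded $W_p^1\to L_r$ with norm $\lesssim h^{1-(1/p-1/r)_+}$, and bounded $W_p^2\to L_r$ with norm $\lesssim h^{2-(1/p-1/r)_+}$. For the second bound the seminorms are replaced by full norms.
\end{itemize}
The real interpolation functor $(\cdot,\cdot)_{\theta,q}$ with $\theta=\sigma-1$ then gives
\[
\lVert Ef\rVert_{L_r}\lesssim h^{(1-\theta)(1-\delta)+\theta(2-\delta)}\lVert f\rVert_{B^\sigma_{p,q}}=h^{\sigma-\delta}\lVert f\rVert_{B^\sigma_{p,q}},
\qquad \delta=(1/p-1/r)_+ .
\]
This uses the identification of $B^\sigma_{p,q}$ as an interpolation space recalled above, and the interpolation target $(L_r,L_r)_{\theta,q}=L_r$.

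The delicate point is that the interpolation constants must be independent of $h$. They are, because the interpolation inequality multiplies the operator norms, $\lVert E\rVert\le M_0^{1-\theta}M_1^\theta$. Part (e) is then the special case $q=p$, since $B^\sigma_{p,p}=W^\sigma_p$.
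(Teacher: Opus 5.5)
Your proof is correct, but it follows a different route from the paper. The paper gives no argument of its own and assembles the statement entirely from the literature. It cites Swartz--Varga (Theorem~3.5 for (a) and (c), Corollary~3.3 for (b) and (f)) and Hedstrom--Varga (Theorem~4.6) for (d) and (e).

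You make everything self-contained, in three stages.
\begin{itemize}
\item Elementary local estimates on each $[x_{i-1},x_i]$: the fundamental theorem of calculus with H\"older's inequality for (b), Rolle's theorem for (f), and the mean value theorem for (c).
\item A summation over subintervals that produces exactly the $(1/p-1/r)_+$ penalty, using the $\ell_p\subset\ell_r$ embedding when $r\geq p$ and H\"older's inequality across subintervals when $r<p$.
\item The exactness bound $\lVert E\rVert\leq M_0^{1-\theta}M_1^{\theta}$ of the real interpolation functor, applied between your $W^1_p$ and $W^2_p$ bounds, for (d) and (e). This is essentially the Hedstrom--Varga mechanism made explicit, and it correctly keeps the constants independent of $h$.
\end{itemize}

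What your route buys:
\begin{itemize}
\item Explicit constants.
\item No use of the restriction $r\geq 2$.
\item The observation that (a) and (c) actually hold with the sharper rates $h^{\alpha}$ and $h^{1+\alpha}$ in every $L_r$. The $(1/2-1/r)_+$ loss in the stated versions is therefore superfluous, and those parts follow from your bounds only because $h\leq 1$.
\end{itemize}
What the citation route buys is brevity and access to the more general spline settings treated in the cited works.

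The only external input you use is the identification $B^\sigma_{p,q}(0,1)=(W^1_p(0,1),W^2_p(0,1))_{\sigma-1,q}$ with norm-equivalence constants independent of $h$. The paper itself recalls this identification, so nothing is missing.
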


\subsection{Trapezoidal rule}

The \emph{trapezoidal rule}, $T_n(f)$, is an approximation of the integral $I(f) = \int_0^1 f(x) \dif x$ that is obtained by integrating the piecewise linear interpolant:
\begin{equation*}
  \begin{split}
  T_n(f) = \int_0^1 (L_n f)(x) \dif x = \sum_{i=1}^n \frac{f(x_i) + f(x_{i-1})}{2} \Delta_i .
  \end{split}
\end{equation*}
The \emph{kernel quadrature rule} is the quadrature that is obtained by integrating the kernel interpolant~\cite{santin2021sampling}. 
Because 2-piecewise linear kernels have piecewise linear interpolants by \Cref{thm:piecewise-linear}, kernel quadrature for such kernels coincides with the trapezoidal rule.

A combination of Theorems~1.1, 1.8, and 1.19 in~\cite{CruzUribe2002} [Parts~(a), (b) and (d)] and Theorem~4 in~\cite{DragomirMabizela1999} [Part~(c)] gives the following theorem on the convergence of the trapezoidal rule in various function spaces.

\begin{theorem} \label{thm:cruz-uribe}
    Suppose that $x_i = i/n$ for $i \in \{0, \ldots, n\}$.
    Define $f_r(x) = f(x) - rx$ for a given function $f$ and $r \in \R$.
    \begin{enumerate}
        \item[(a)] If $f \in C^{0,\alpha}(0, 1)$ for $\alpha \in (0, 1]$, then
        \begin{equation*} 
            \lvert I(f) - T_n(f) \rvert \leq \frac{1}{(1+\alpha) 2^\alpha} \cdot n^{-\alpha} \inf_{r \in \R} \, \lvert f_r \rvert_{C^{0,\alpha}(0,1)} .
        \end{equation*}
        \item[(b)] If $f \in W^1_p(0, 1)$ for $p \in [1, \infty]$ with Hölder conjugate $p' = p/(p-1)$, then
        \begin{equation*} 
            \lvert I(f) - T_n(f) \rvert \leq \frac{1}{2(p'+1)^{1/p'}} \cdot n^{-1} \inf_{r \in \R} \, \lvert f_r \rvert_{W_p^1(0, 1)} .
        \end{equation*}
        \item[(c)] If $f \in C^{1,\alpha}(0, 1)$ for $\alpha \in (0, 1]$, then 
        \begin{equation*}
            \lvert I(f) - T_n(f) \rvert = O(n^{-(1 + \alpha)}).
        \end{equation*}
        \item[(d)] If $f \in W^2_p(0, 1)$ for $p \in [1, \infty]$ with Hölder conjugate $p' = p/(p-1)$, then
        \begin{equation*}
            \lvert I(f) - T_n(f) \rvert \leq \frac{\mathrm{B}(p'+1, p'+1)}{2} \cdot n^{-2} \lvert f \rvert_{W_p^2(0, 1)} ,
        \end{equation*}
        where $\mathrm{B}$ is the beta function.
    \end{enumerate}
    The bounds in Parts~(a), (b) and~(d) are sharp, in that in each case there is $f$ in the appropriate function space such that an equality holds.
\end{theorem}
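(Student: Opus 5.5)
The statement to prove is \Cref{thm:cruz-uribe}. It is a compilation of results from \cite{CruzUribe2002} and \cite{DragomirMabizela1999}, so in principle it suffices to cite them. I would nevertheless give a self-contained sketch based on the Peano kernel representation of the trapezoidal error on a uniform grid, applied cell by cell.

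\textbf{Local error formula.} On a cell $[a,b]=[x_{i-1},x_i]$ of length $1/n$, write $m=(a+b)/2$. For $f\in W_1^1$, integration by parts gives
\begin{equation*}
\int_a^b f\,\dif x-\frac{b-a}{2}\bigl(f(a)+f(b)\bigr)=\int_a^b (m-x)f'(x)\,\dif x .
\end{equation*}
The weight $m-x$ is odd about $m$ and integrates to zero. Summing over the cells expresses the global error as $\int_0^1 \phi(x)f'(x)\,\dif x$, where $\phi$ is the sawtooth function, which has zero mean on every cell. This immediately explains the $\inf_r$ in Parts (a) and (b): since $I(f)-T_n(f)$ and $\int\phi f'$ are unchanged when $f$ is replaced by $f_r$ (the trapezoidal rule is exact for linear functions), we may work with $f_r$ throughout.

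\textbf{Part (b).} Apply Hölder's inequality to $\int\phi f_r'$ to get the bound $\lVert\phi\rVert_{L_{p'}}\lvert f_r\rvert_{W^1_p}$. A direct computation gives
\begin{equation*}
\lVert\phi\rVert_{L_{p'}}=\frac{1}{2n}\,(p'+1)^{-1/p'} .
\end{equation*}
Sharpness follows by choosing $f'=\lvert\phi\rvert^{p'-1}\operatorname{sign}\phi$, the equality case of Hölder.

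\textbf{Part (a).} For Hölder functions we cannot differentiate, so we work directly with cell values. On each cell write the error as $\int_a^b \bigl(f(x)-\tfrac12(f(a)+f(b))\bigr)\,\dif x$ and bound the integrand by $\tfrac12\bigl(\lvert f(x)-f(a)\rvert+\lvert f(x)-f(b)\rvert\bigr)$. This gives
\begin{equation*}
\frac{\lvert f\rvert_{C^{0,\alpha}}}{2}\int_a^b\bigl((x-a)^\alpha+(b-x)^\alpha\bigr)\dif x=\frac{\lvert f\rvert_{C^{0,\alpha}}\,(1/n)^{1+\alpha}}{1+\alpha}.
\end{equation*}
Summing over the $n$ cells yields order $n^{-\alpha}$ but not the stated constant $1/((1+\alpha)2^\alpha)$. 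Obtaining that constant, and its sharpness, needs a finer argument: pair $x$ with its reflection $a+b-x$ and use the symmetric extremal function $\lvert x-m\rvert^\alpha$-type profiles, as Cruz-Uribe and Neugebauer do. This is the step I expect to be the main obstacle. I would defer to \cite{CruzUribe2002} for the exact constant, after showing that the reflection symmetry reduces each cell to an extremal problem attained by a tent-shaped Hölder function.

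\textbf{Parts (c) and (d).} Integrate by parts once more, using a second Peano kernel $\psi=\tfrac12(x-a)(b-x)$ on each cell, so that the error equals $-\int_0^1\psi f''$.
\begin{itemize}
\item For (d), Hölder's inequality gives the bound $\lVert\psi\rVert_{L_{p'}}\lvert f\rvert_{W^2_p}$. Substituting $x=a+t/n$ turns the norm into $\tfrac12 n^{-2}\bigl(\int_0^1 t^{p'}(1-t)^{p'}\dif t\bigr)^{1/p'}$, a beta integral. This matches the stated constant in the $p'=1$ case (there $\mathrm{B}(2,2)/2=1/12$); for general $p$ I would expect a power $1/p'$ on the beta factor and would check the paper's constant against the reference. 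Sharpness again comes from the equality case of Hölder.
\item For (c), use $\int_a^b\psi=\tfrac{1}{12}(b-a)^3$ to subtract $f''$ at the midpoint, or equivalently use the first-order form with $f'(x)-f'(m)$. This gives a per-cell error $O(n^{-2-\alpha})$, hence $O(n^{-1-\alpha})$ in total, as in \cite{DragomirMabizela1999}.
\end{itemize}
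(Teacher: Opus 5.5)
The paper gives no argument for this theorem; it only cites Theorems~1.1, 1.8 and 1.19 of \cite{CruzUribe2002} and Theorem~4 of \cite{DragomirMabizela1999}. Your Peano-kernel argument is therefore a genuinely different, self-contained route. The citation buys brevity and the equality characterizations of the sources. Your route explains the $\inf_r$: the rule is exact on linear functions and the sawtooth $\phi$ has zero mean on every cell. It also shows that each constant is a dual norm of a Peano kernel.

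Part~(b) is correct as written. In Part~(c) only your second route works. For $\alpha<1$ the derivative $f''$ need not exist, and since $\int_a^b\psi\neq0$, subtracting $f''(m)$ would cancel nothing. The first-order form gives $\lvert E_{\mathrm{cell}}\rvert\le\lvert f\rvert_{C^{1,\alpha}}\int_a^b\lvert x-m\rvert^{1+\alpha}\dif x$. This even yields the explicit bound $\lvert I(f)-T_n(f)\rvert\le 2^{-1-\alpha}(2+\alpha)^{-1}n^{-1-\alpha}\lvert f\rvert_{C^{1,\alpha}}$.

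The one incomplete step is the constant in Part~(a), and the reflection you mention settles it directly, with no extremal problem. Put $g(x)=f(x)+f(a+b-x)-f(a)-f(b)$, so the cell error equals $\frac12\int_a^b g(x)\dif x$. Grouping $g$ as $[f(x)-f(a)]+[f(a+b-x)-f(b)]$ gives $\lvert g(x)\rvert\le 2(x-a)^\alpha\lvert f\rvert_{C^{0,\alpha}}$. Grouping it as $[f(x)-f(b)]+[f(a+b-x)-f(a)]$ gives $\lvert g(x)\rvert\le 2(b-x)^\alpha\lvert f\rvert_{C^{0,\alpha}}$. Hence
\begin{equation*}
\lvert E_{\mathrm{cell}}\rvert\le\lvert f\rvert_{C^{0,\alpha}}\int_a^b\min\{x-a,b-x\}^\alpha\dif x=\frac{n^{-1-\alpha}}{(1+\alpha)2^\alpha}\,\lvert f\rvert_{C^{0,\alpha}},
\end{equation*}
and summing over the $n$ cells and replacing $f$ by $f_r$ gives~(a). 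Equality holds for $f=\min\{x-x_{i-1},x_i-x\}^\alpha$ on each cell: it has seminorm $1$ and $T_n(f)=0$, and it is your tent when $\alpha=1$. The profile $\lvert x-m\rvert^\alpha$, by contrast, attains only the fraction $\alpha$ of the bound, so it is not extremal when $\alpha<1$.

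Your doubt about Part~(d) is justified. Hölder's inequality gives the constant $\frac12\mathrm{B}(p'+1,p'+1)^{1/p'}$, and this is sharp for $p>1$ (take $f''=\psi^{p'-1}$). So the printed constant, which lacks the exponent $1/p'$, is wrong for $p<\infty$. With $p=2$ and $f''=\psi$, the error equals $\frac{1}{2\sqrt{30}}\,n^{-2}\lvert f\rvert_{W^2_2}\approx 0.091\,n^{-2}\lvert f\rvert_{W^2_2}$, which exceeds the printed $\mathrm{B}(3,3)/2=1/60$. For $p=1$ the printed expression degenerates (it tends to $0$ as $p'\to\infty$), whereas the correct constant is $1/8$.

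Finally, for $p=1$ in (b) and (d) your equality case $\lvert\phi\rvert^{p'-1}\operatorname{sign}\phi$ is undefined, since $p'=\infty$. There the constants are best possible only as suprema. They are approached by derivatives concentrating near the nodes (resp.\ midpoints) but are not attained by any non-linear $f$.
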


\section{Discussion}

This article studied piecewise linear interpolation in the framework of kernel interpolation.
We believe that many results in this article could be taken much further:
\begin{itemize}
    \item The inner product~\eqref{eq:rkhs-inner-product} does not contain the term $\langle f, g \rangle_{L_2(0,1)}$. 
    Inner products that include this term correspond to reproducing kernels written in terms of $\exp$; see Examples~13, 14, and 17 in~\cite[Sec.\@~7.4]{Berlinet2004}.
    For example, the full Sobolev inner product $\langle f, g \rangle = \langle f, g \rangle_{L_2(0,1)} + \langle f', g' \rangle_{L_2(0,1)}$ gives rise to the reproducing kernel
    \begin{equation*}
        K(x, y) = \frac{\cosh(\min\{x,y\}) \cosh(1-\max\{x,y\})}{\sinh(1)} .
    \end{equation*}
    A generalization of \Cref{thm:sobolev-1-space} for inner products that include $\langle f, g \rangle_{L_2(0,1)}$ may be possible.
    In this case one would likely recover some type of piecewise \emph{exponential} interpolation as kernel interpolation.
    \item Most of the examples we reviewed in Sections~\ref{sec:examples-1} and~\ref{sec:examples-2} are connected to the Brownian motion. 
    Every kernel of the form~\eqref{eq:W1-kernel} can probably be viewed as a covariance kernel of a variant of the Brownian motion.
    \item Higher-order spline interpolants can be obtained from kernel interpolation based on the $m$ \emph{times integrated Brownian motion kernel}
    \begin{equation*}
        K_m(x, y) = \int_0^x \int_0^y K_{m-1}(t, s) \dif t \dif s = \int_0^1 \frac{(x-t)_+^m (y-t)_+^m}{(m!)^2} \dif t ,
    \end{equation*}
    where $K_0(x, y) = \min\{x, y\}$ and $(x)_+ = \max\{x, 0\}$~\cite[Ch.\@~1]{Wahba1990}.
    It is possible that $K_0$ can be replaced with any kernel of the form~\eqref{eq:W1-kernel}. 
    Expressing the corresponding inner product may prove challenging or complicated.
    If the inner product admits a convenient expression, the Green kernel interpretation and superconvergence theory in \Cref{sec:green-kernels,sec:superconvergence} may generalize.
\end{itemize}

\begin{acknowledgement}
TK was supported by the Research Council of Finland projects 359183  and 368086.
TK acknowledges the research environment provided by ELLIS Institute Finland.
GS is a member of INdAM-GNCS, and his work was partially supported by the project ``Perturbation problems and asymptotics for ellip-
tic differential equations: variational and potential theoretic method'' funded by the program ``NextGenerationEU'' and by MUR-PRIN, grant 2022SENJZ3.
\end{acknowledgement}

\bibliography{references}%
\bibliographystyle{abbrv}
\end{document}